\newtheorem{theorem}{Theorem}
\newtheorem{lemma}[theorem]{Lemma}
\newtheorem{definition}[theorem]{Definition}
\newtheorem{remark}[theorem]{Remark}
\DeclareMathOperator{\diff}{d\!}
\DeclareMathOperator{\Diff}{Diff}
\DeclareMathOperator{\Id}{Id}
\newcommand{\adm[1]}{$#1-$admissible}
\newcommand{\lip}{Lipschitz }
\newcommand{\R}{\mathbb{R}}
\def\o{\circ}
\def\p{\partial}
\def\exp{\operatorname{exp}}
\providecommand{\norm}[1]{\left\|#1\right\|}
\newcommand{\pp}[2]{\frac{\partial #1}{\partial #2}}
\newcommand{\bbR}{\mathbb{R}}
\newcommand{\cN}{\mathcal{N}}
\newcommand{\cG}{\mathcal{G}}
\newcommand{\cH}{\mathcal{H}}
\newcommand{\Pbb}{\mathbb{P}}
\newcommand{\CJC}[1]{#1}
\renewcommand{\SLC}[1]{#1}
\begin{document}

\title{Bayesian data assimilation in shape registration}
\author{C.J. Cotter$^1$, S.L. Cotter$^2$ and F.-X. Vialard$^3$}
\eads{\mailto{simon.cotter@manchester.ac.uk}} \address{$^1$Department of
  Aeronautics, Imperial College London, London SW7 2AZ United Kingdom\\
  $^2$School of Mathematics, University of Manchester, Manchester M13 9PL, United Kingdom\\$^3$Institute for Mathematical
  Sciences, Imperial College London, 53 Prince's Gate, London SW7 2PG,
  United Kingdom}
\begin{abstract}
  In this paper we apply a Bayesian framework to the problem of
  geodesic curve matching. Given a template curve, the geodesic
  equations provide a mapping from initial conditions for the
  conjugate momentum onto topologically equivalent shapes. Here, we
  aim to recover the well-defined posterior distribution on the
  initial momentum which gives rise to observed points on the target
  curve; this is achieved by explicitly including a reparameterisation
  in the formulation. Appropriate priors are chosen for the functions
  which together determine this field and the positions of the
  observation points, the initial momentum $p_0$ and the
  reparameterisation vector field $\nu$, informed by regularity
  results about the forward model. Having done this, we illustrate how
  Maximum Likelihood Estimators (MLEs) can be used to find regions of
  high posterior density, but also how we can apply recently developed
  \SLC{Markov chain Monte Carlo (MCMC)} methods on function spaces to
  characterise the whole of the posterior density. These illustrative
  examples also include scenarios where the posterior distribution is
  multimodal and irregular, leading us to the conclusion that
  knowledge of a state of global maximal posterior density does not
  always give us the whole picture, and full posterior sampling can
  give better quantification of likely states and the overall
  uncertainty inherent in the problem.
\end{abstract}
\maketitle{}

\section{Introduction}
Geodesics on shape space induced from diffeomorphisms are proving to
be a powerful tool in the field of computational anatomy
\cite{MiTrYo2003,MiTrYo2006}. Not only do they provide a notion of
distance between topologically equivalent shapes, they also provide a
linear characterisation of patches of shape space centred on a
template image using the initial conditions of the conjugate momentum
along a geodesic \cite{VaMiYoTr2004}. This permits the application of
linear statistical techniques (the adaptation of principal component
analysis known as principal geodesic analysis, for example
\cite{FlLuPiJo2004}) since the momentum inhabits a linear cotangent
space. Bayesian statistical techniques have been developed in this
area in recent years, with advances in methods of determining the most
likely template for a dataset as well as the most likely metric
\cite{AlKuTr2010,AlKuTr2008,Ma_etal2008}. There are many different
shape spaces: the shape of curves, landmarks, images, fibre bundles
\emph{etc.}, but there is a unifying paradigm since in all cases the
underlying velocity field that generates the diffeomorphisms that act
on the shapes evolves according to the EPDiff equation
\cite{HoRaTrYo2004}.

In this paper, we shall address the following inverse problem: given
noisy observations of points around a curve (which may have been
obtained from a segmentation algorithm or human input), what is the
initial momentum that generates this curve from a given template? We
concentrate on the parameterisation-independent problem, in which we
treat curves as being equivalent if they are related \emph{via}
reparameterisation. This is a challenging problem which has been
addressed in a few different ways \cite{Glaunes,VaGl2005}, but here we
shall make use of the approach of \cite{CoHo2010} in which a
reparameterisation variable is included explicitly, and the
commutation of the reparameterisation with the geodesic flow equations
is exploited. In the inverse problem nomenclature, this allows us to
define an observation operator that produces point values on the
curve, at a cost of introducing this extra variable.

\CJC{Our inverse problem can be summarised as follows. For a chosen
  metric, equations (\ref{e: u}-\ref{e:p dot}) together with
  condition (\ref{e:reparam}) describe the motion along the geodesic
  path in the space of unparameterised curves, written in terms of a
  specific curve parameterisation $q(s,t)$, and a conjugate momentum
  $p(s,t)$, where $s$ is the parameterisation variable around the
  curve, and $t$ is time, \emph{i.e.} the parameter along the
  geodesic. The initial momentum $p(s,0):=p_0(s)$ completely
  determines the direction of the geodesic, and thus the
  parameterisation-independent geodesic between two parameterised
  curves $q^1(s)$ and $q^2(s)$ can be completely described by the
  initial momentum $p_0(s)$ and a reparameterisation map $\eta(s)$
  which is identified through a one-dimensional vector field
  $\nu(s)$. Given a template curve $q^1$, $p_0$ completely describes
  the final curve $q^2$ and $\nu$ completely describes a particular
  parameterisation of $q^2$. We wish to identify the curve $q^2$ which
  is most consistent with a set of observed data points, and we do
  this by defining a map $\mathcal{G}(p_0,\nu)$ which returns a finite
  set of points on $q^2$. The inverse problem is to solve $y =
  \mathcal{G}(p_0,\nu) + \eta$ given finite data $y$ and
  observation noise $\eta$. Since the inputs to $\mathcal{G}$ are
  functions and the output is a finite-dimensional vector, the problem
  is underdetermined.}

\SLC{We choose to adopt the Bayesian approach in this paper, for two
  main reasons. Firstly, the inverse problem of finding the geodesic
  flow map which deforms one shape into another from observations of
  the second shape is ill-posed and underdetermined. Secondly, the
  observations that are made are noisy in nature, and as such a
  probabilistic setting in which we can quantify the uncertainty
  inherent in the problem is appropriate. As the unknown quantities
  that we would like to recover are functions, we must take extra care
  to ensure that the problem is formulated
  non-parametrically\cite{Stuart_Bayes}. As such, we look to follow
  the methodologies that have been utilised previously in various
  fluid mechanics problems \cite{cdrs08,cds10,cds11}.

  Once the Bayesian inverse problem has been defined in such a way as
  to be well-posed on function space, we must consider how appropriate
  algorithms can be implemented on a computer in order to characterise
  the posterior probability distributions on the unknown
  functions. The unknown functions are of course discretised within
  the algorithms, and because it may be desirable to refine the mesh
  to obtain further accuracy, it is important that the MCMC methods
  used to sample the posterior converge in distribution at rates
  independent of the discretisation used. A range of MCMC methods
  which are well-defined on function space (and as such have mixing
  rates which are independent of discretisation of those functions)
  are available\cite{crsw11}, and have been used for Bayesian inverse
  problems on function spaces\cite{cds10,cds11}. The properties of
  these algorithms make them a highly viable candidate for the
  analysis of the problems identified in this paper. Specifically, a
  function space analogue of the random walk Metropolis-Hastings
  (RWMH) algorithm will be used in the numerical examples in this
  paper, referred to in \cite{Stuart_Bayes,crsw11} as the pCN
  algorithm.  }


In Section \ref{sec:eq} we introduce the problem, and describe the
equations of motion of the shape as it is deformed by the velocity
field, and demonstrate how we can find geodesic paths in shape space
between two shapes. We also show in this section that the deformed
shape is Lipschitz continuous with respect to two functions, the
initial momentum $p_0$, and the reparameterisation function
$\nu$. \SLC{In order to tackle an inverse problem on function spaces,
  it is necessary to show that this problem is well-posed. This
  amounts to finding function spaces on which the forward problem is
  locally Lipschitz, and in Section \ref{sec:prop} we find sufficient
  conditions on the regularity of these spaces.} \SLC{In Section
\ref{sec:bayes} we frame the inverse problem as a well-posed Bayesian
inverse problem, and identify minimum regularity priors for the
functions, informed by the analysis in Section \ref{sec:prop}.}

In Section \ref{sec:MCMC} we describe the Markov chain Monte Carlo
(MCMC) algorithm that we use to sample directly from the well-defined
posterior density on {$p_0$ and $\nu$}. In Section \ref{sec:NA} we
briefly discuss how we numerically approximate the dynamics described
in Section \ref{sec:eq}, before presenting some illustrative numerical
examples in Section \ref{sec:num}. We finish in Section \ref{sec:conc}
with some conclusions and discussion.

\section{Description of the forward model}\label{sec:eq}

In this section, we review the equations of motion for curves in the
plane acted on by geodesics in the diffeomorphism group, and explain
how these equations can be used to provide a mapping from the space of
periodic scalar functions (which turn out to be the normal
component of a conjugate momentum variable) to topology-preserving
nonlinear deformations of some chosen curve in the plane. We then
provide some analytical results that are required for defining the
associated Bayesian inverse problem.

In our approach, we parameterise an oriented curve in the plane as a
continuous function $q$ from a space $S$ (such as the circle, $S^1$)
into $\mathbb{R}^2$ \emph{i.e.}, $q\in
C^0(S^1,\mathbb{R}^2)$. Although in this paper we concentrate on the
lower dimensional case of a curve in the plane, it may be extended to
surfaces in $\mathbb{R}^3$, which provides many applications in
medical imaging, for example. We parameterise an evolving curve as
$q(s,t)$, where $s\in S$ is the parameter around the curve, and $t\in
[0,1]$ is the time parameter. Following the methodology of
\cite{MiYo2001,GlTrYo04,VaGl2005} we constrain the motion of the curve
$q(s,t)$ to the action of diffeomorphisms by requiring that
\begin{equation}
\label{e:motion}
\pp{}{t}q(s,t) = u(q(s,t),t)
\end{equation}
where $u(x,t)$ is a time-parameterised family of vector fields on
$\mathbb{R}^2$. This guarantees that the topology of the curve is
preserved (\emph{i.e.} there are no overlaps or cavitations).  We wish
to study curve evolution from a template curve $\Gamma^1$
(parameterised by $q^1(s)$) to a target curve $\Gamma^2$
(parameterised by $q^2(s)$). However, since we only want information
about the shape of the curve, and not the specific parameterisation,
we do not wish to enforce that any specific point $q^1(s)$ gets mapped
to any specific point on $\Gamma^2$. We instead use the following
generalised boundary condition
\begin{equation}
\label{e:bcs}
q(s,0) = q^1(\eta(s)), \quad q(s,1) = q^2(s),
\end{equation}
for arbitrary reparameterisations $\eta \in \Diff_+(S)$, the
orientation-preserving subgroup of the diffeomorphism group $\Diff(S)$
on $S$. If the boundary conditions \eref{e:bcs} are satisfied, we say
that $u$ describes a \emph{path} between $\Gamma^1$ and $\Gamma^2$. We
select a function space $B$ for vector fields, and define the
\emph{distance} along the path as
\begin{equation}
\label{e:distance}
\int_0^1 \frac{1}{2}\|u\|^2_B \diff{t}.
\end{equation}
For simplicity we assume that $B$ is a Hilbert space and that there
exists an operator $A$ such that
\begin{equation}\label{def:A}
\|u\|^2_B = \langle u,Au\rangle_{L^2}.
\end{equation}
The \emph{shortest path} between $\Gamma^1$ and $\Gamma^2$ is defined
by minimising \eref{e:distance} over $u$ and $\eta$ subject to
\eref{e:motion} and the boundary conditions \eref{e:bcs}.  To obtain
the equations of motion for the shortest path, we introduce Lagrange
multipliers $p(s,t)$ (which we call the ``momentum'') to enforce
\eref{e:motion}, and seek extrema of the action
\[
\mathcal{A} = \int_0^1 \frac{1}{2}\|u\|^2_B + \langle p,\dot{q}-u(q)\rangle\diff{t}.
\]
This leads to the following equations of motion in weak form:
\begin{eqnarray}
\label{e: u}
\langle \delta u, A u\rangle_{L^2}
  - \langle p,\delta u(q)\rangle & = & 0, \\
\label{e:q dot}  \int_0^1\left\langle \delta p,\pp{q}{t}-u(q)
\right\rangle\diff{t} & = & 0, \\
\label{e:p dot}  \int_0^1\left\langle p, \pp{\delta q}{t} - 
(\nabla u(q))\delta q\right\rangle\diff{t}
& = & 0,
\end{eqnarray}
where $\delta p$ and $\delta q$ are space-time test functions, with
\[
u,\delta u\in B, \quad p,\delta p\in L^2, \quad
q,\delta q \in H^1.
\]
If the minimisation is taken over all reparameterisations $\eta \in \Diff_+(S)$
then we obtain the condition
\begin{equation}
\label{e:reparam}
p\cdot\pp{q}{s} = 0, \quad \forall t\in [0,1].
\end{equation}
The condition states that the momentum $p$ is normal to the shape.  As
described in \cite{CoHo2010}, if this condition is satisfied, then the
curve evolution is invariant under the transformation
\begin{equation}
\label{reparam}
(p,q) \mapsto 
(\bar{p},\bar{q}) = \left({p\circ \eta}{\pp{\eta}{s}} ,q\circ\eta\right)
\end{equation}
for $\eta\in\Diff_+(S)$, which is the cotangent-lift of the
transformation $q\mapsto q\circ\eta$. This means that if equations
(\ref{e: u}-\ref{e:p dot}) are solved with initial conditions $(p,q)$
and $(\bar{p},\bar{q})$ then $\bar{q}=q\circ\eta$ at time 1. As
described in \cite{GaBaRa2011}, solutions that satisfy \SLC{equation}
\ref{e:reparam} are parameterised realisations of geodesics on the
shape space $C^0(S^1,\mathbb{R}^2)/\Diff_+(S)$. 

We define the time-1 flow map $\Psi$
\[
\Psi (p|_{t=0},q|_{t=0}) = q|_{t=1},
\]
where $p$ and $q$ satisfy equations (\ref{e: u}-\ref{e:p dot}). Having
fixed $q|_{t=0}=q^1$, we define a parameterisation-independent mapping
between scalar functions $p_0:S\mapsto \mathbb{R}$ (the normal
component of the initial conditions of $p$) and topologically
equivalent curves $q^2$ obtained by
\[
q^2 = \Psi(p_0n,q^1),
\]
where $n$ is the normal vector to the curve $q^1$.  The power of this
mapping is that it allows us to perform linear operations on the space
of functions $p_0$, such as averages. 

In this paper, our aim is to estimate the probability measure for
$p_0$ given a set of observed points $(q_1,\ldots,q_n)$ from the curve
$q^2$. These points may have been obtained from a digitised medical
image, for example. These points are assumed to be sorted in order
according to the orientation of the curve. We cannot directly solve
the inverse problem of finding $p_0$ such that
\[
\Psi(p_0n,q^1)(s_i) = q_i, \qquad i=1,\ldots,n, 
\]
since we do not know the values $s_i$ of the curve parameter $s$
that correspond to each $q_i$. To fix this, we introduce a
reparameterisation variable $\eta\in \Diff_+(S^1)$, and seek
$(p_0,\eta)$ such that
\[
\Psi \left(
\left(p_0n\right)\circ\eta \,
\pp{\eta}{s}
,q^1\circ\eta\right)(s_i) = q_i, \quad i=1,\ldots,n,
\]
with $\{s_i\}_{i=1}^n$ is some chosen distinct sequence of points in
$S$ ordering according to orientation, \emph{e.g.} equispaced
points. This guarantees to preserve the ordering since the curve is
evolved by a diffeomorphism.  The introduction of the
reparameterisation variable does not alter the shape of the obtained
curve, just the particular parameterisation used.

{Following \cite{CoClPe2011}, we construct reparameterisations from
scalar periodic functions $\nu$ by solving
\begin{equation}
\label{e: lie}
  \pp{\chi}{t}(s,t) = \nu(\chi(s,t)), \quad \chi(s,0)=\Id, 
\quad \chi(0,t)=\chi(1,t), 0\leq t,
\end{equation}
where $\Id$ is the identity map ($\Id(s)=s$). The reparameterisation
map is then evaluated from $\eta(s)=\chi(s,1)$. The function $\nu$
represents the generating vector field for the reparameterisation
$\eta$ on the scalar interval $[0,1]$ with periodic boundary
conditions.} This is known as \emph{Lie exponentiation}\footnote{As
described in \cite{KeWe2008}, not all reparameterisations of the
circle can be obtained from Lie exponentiation, and a more general
approach would be to generate reparameterisations from geodesics in a
way that is similar to how the deformations of the curves are obtained
(known as Riemann exponentiation).}, and guarantees that $\eta$ is an
orientation preserving, smooth invertible map (for sufficiently smooth
$\nu$).  {This defines a reparameterisation map:
\[
\mathcal{R}(p,q,\nu) =  \left(
{\left(p_0n\right)\circ\eta}
{\pp{\eta}{s}}
,q^1\circ\eta\right)
\]}
where $\eta$ is obtained from (\ref{e: lie}).  An illustrative diagram
is given in Figure {\SLC{\ref{forward diag}}}.
\begin{figure}[htp] 
\centerline{
\includegraphics[width=14cm]{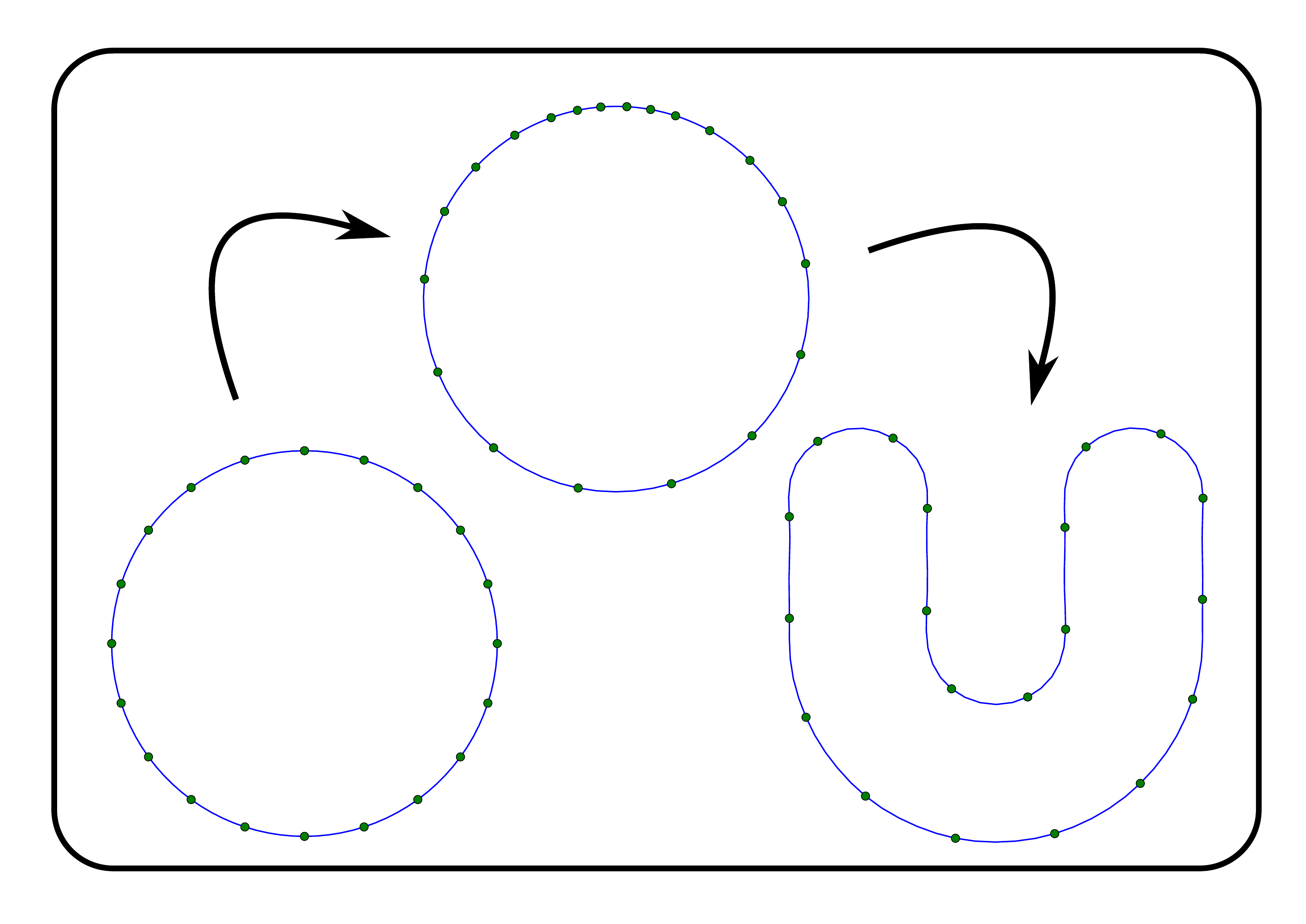}
}
\caption{\label{forward diag}{Diagram illustrating the forward
  model. Curves are shown with highlighted points to indicate the
  parameterisation. The reparameterisation velocity $\nu$ generates a
  reparameterisation of the curve (which also transforms the initial
  momentum $p_0$), and then the (transformed) initial momentum $p_0$
  generates an evolution of the curve by the time-1 flow map of
  equations (\ref{e: u}-\ref{e:p dot}). For the undiscretised
  equations changing $\nu$ but not $p_0$ leads to different
  parameterisations of the same curve, since reparameterisation and
  the time-1 flow map commute.}}
\end{figure} 

Hence we define the
observation operator $\mathcal{G}$ by
\SLC{
\begin{equation}
\label{e: observation}
\mathcal{G}(p_0,\nu) = 
\left(
\begin{array}{c c}
G(s_i)\\
\vdots \\
G(s_n) \\
\end{array}\right),
\end{equation}
where $G = \Psi \circ \mathcal{R}(p_0n,q^1,\nu)$.}
The normal component of $p_0$ then characterises the shape of the
target curve $\Gamma^2$ relative to the curve $\Gamma^1$, whilst the
generator variable $\nu$ merely describes the reparameterisation of the
target curve which is obtained at the minimum. 

\section{Properties  of the Observation Operator}\label{sec:prop}
In this section, we show that the observation operator $\mathcal{G}$,
given in Equation \eref{e: observation}, is Lipschitz in $p_0$ and
$\nu$, for appropriately chosen spaces. We make the following
definition.
\begin{definition}
  We say that a Hilbert space $B$ of vector fields is embedded in
  $C^n(\Omega,\R^d)$ if there exists a constant $C_e$ s.t. $\| v
  \|_{n,\infty} \leq C_e \|v\|_{B}$. In that case we also say for
  \SLC{notational} convenience that $B$ is \adm[n].
\end{definition}
We adopt the notation that $\nu \in D$. The main result of this
section is the following Theorem.
\begin{theorem}\label{Sh:Lip}
\begin{enumerate}
\item Let $B$ be \adm[2], and $D$ be
  \adm[1]. Then for $q_0 \in W^{1,\infty}$, the observation operator
  $\mathcal{G}$ is a Lipschitz continuous map from $(L^2,D)$ to
  $\mathbb{R}^{2n}$.
\item  Let $B$ be \adm[3] and $D$ be \adm[1]. Then for $p_0\in H^{-1}$ and
  $q_0 \in W^{1,\infty}$, the observation operator $\mathcal{G}$ is
  Lipschitz continuous.
\end{enumerate}
\end{theorem}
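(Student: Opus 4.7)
The plan is to decompose $\mathcal{G} = \mathrm{ev} \circ \Psi \circ \mathcal{R}$ into three pieces and prove Lipschitz continuity of each, in appropriately chosen function spaces, and then compose. The evaluation map $\mathrm{ev}: C^0(S,\R^2) \to \R^{2n}$ is trivially bounded linear, so the work reduces to controlling $\mathcal{R}$ and $\Psi$.

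First I would handle the reparameterisation map $\mathcal{R}(p_0 n, q^1, \nu)$. The map $\nu \mapsto \eta$ is defined through the flow \eref{e: lie}, and since $D$ is $1$-admissible we have $\nu \in C^1$, so standard ODE theory (Gronwall on the difference of two flows driven by $\nu_1,\nu_2$) gives Lipschitz dependence of $\eta$ in the $C^1$ norm on $\nu$ in the $D$-norm. Then the composition and multiplication in $\mathcal{R}$, namely $(p_0,\nu)\mapsto ((p_0 n)\circ\eta)\,\partial_s\eta$ and $\nu\mapsto q^1\circ\eta$, are handled by standard composition estimates: for $q^1\in W^{1,\infty}$ the map $\eta\mapsto q^1\circ\eta$ is Lipschitz from $C^0$ to $C^0$, and for $p_0\in L^2$ (resp.\ $H^{-1}$) the map $(p_0,\eta)\mapsto (p_0\circ\eta)\,\partial_s\eta$ is bilinear and bounded into $L^2$ (resp.\ $H^{-1}$) because $\eta$ is a $C^1$ diffeomorphism with uniformly controlled Jacobian. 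This produces Lipschitz continuity of $\mathcal{R}$ from $(L^2,D)$ (resp.\ $(H^{-1},D)$) into $L^2\times W^{1,\infty}$ (resp.\ $H^{-1}\times W^{1,\infty}$).

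Next I would establish Lipschitz dependence of $\Psi$ on the initial data $(p|_{t=0}, q|_{t=0})$. The strategy is to reformulate \eref{e: u}--\eref{e:p dot} as an ODE on the Banach space of pairs $(p,q)$: solve $u = A^{-1}(p\,\delta_q)$ with $u\in B$, then integrate $\dot q = u(q)$ and $\dot p = -(\nabla u(q))^T p$. Under the admissibility assumption, the kernel $K$ associated with $A$ maps the appropriate distributional pairing $p\,\delta_q$ into $C^2$ (case 1) or $C^3$ (case 2), so that $u$ and $\nabla u$ have enough regularity to make the right-hand side of the $(p,q)$-system locally Lipschitz on $L^2\times W^{1,\infty}$ (resp.\ $H^{-1}\times W^{1,\infty}$). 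A Gronwall argument applied to the difference of two solutions then yields Lipschitz continuity of the time-$1$ map $\Psi$ in the stated topologies. In case (2) the extra derivative is needed precisely because the $p$-equation now acts on $H^{-1}$ via duality against $\nabla u\in C^2$, so we must pair against a $C^2$ test field, forcing $u\in C^3$ and hence $B$ $3$-admissible.

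Finally, composing the Lipschitz continuity of $\mathcal{R}$, $\Psi$, and $\mathrm{ev}$ gives the Lipschitz continuity of $\mathcal{G}$ in both cases. The main obstacle is case (2): when $p_0\in H^{-1}$, one has to give precise meaning to the product $p\,\delta_q$ and to the evolution of $p$ in a negative Sobolev space, and to show that the nonlinear map $(p,q)\mapsto((\nabla u(q))^T p, u(q))$ is locally Lipschitz in the $H^{-1}\times W^{1,\infty}$ topology. This is where the $3$-admissibility of $B$ is actually used, and where most of the technical care goes; once that is set up, Gronwall and the reparameterisation estimates close the argument.
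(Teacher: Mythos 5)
Your decomposition $\mathcal{G}=\mathrm{ev}\circ\Psi\circ\mathcal{R}$ follows the order in which the forward model is defined, but it breaks at the reparameterisation step: the map $\eta\mapsto p_0\circ\eta$ is \emph{not} Lipschitz in $\eta$ when $p_0$ is only in $L^2$ (let alone $H^{-1}$). Composition estimates of the form $\|f\circ\eta_1-f\circ\eta_2\|\leq C\|f\|\,\|\eta_1-\eta_2\|$ require one more derivative on $f$ than the norm in which you measure the difference (this is exactly inequality \eref{LipComp} in the text, which needs $f\in C^{1,\infty}$ to control the $L^2$ difference). Concretely, for $p_0(s)=\sin(ks)$ and $\eta_2=\eta_1+h$ one has $\|p_0\circ\eta_1-p_0\circ\eta_2\|_{L^2}\sim\min(2,kh)\,\|p_0\|_{L^2}$, so no Lipschitz constant uniform over bounded sets of $L^2$ exists. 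Your claim that $(p_0,\eta)\mapsto(p_0\circ\eta)\,\partial_s\eta$ is ``bilinear and bounded'' conflates boundedness in $p_0$ for fixed $\eta$ (true) with Lipschitz dependence on $\eta$ for fixed $p_0$ (false at this regularity). So the first leg of your composition does not close, and the situation is worse, not better, in case (2) where $p_0\in H^{-1}$.

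The paper avoids this entirely by exploiting the commutation of reparameterisation with the geodesic flow, i.e.\ the invariance \eref{reparam}: solving the geodesic equations with the reparameterised data $\mathcal{R}(p_0n,q^1,\nu)$ and then evaluating at $s_i$ gives the same answer as solving with the \emph{unreparameterised} data $(p_0n,q^1)$ and evaluating $q(1)$ at the moved points $\eta(s_i)$. In that order the reparameterisation only ever acts on the time-one curve $q(1)$, which lies in $W^{1,\infty}$ because $B$ is $2$- or $3$-admissible, and there the composition estimate $\|q(1)\circ\eta_1-q(1)\circ\eta_2\|_\infty\leq\|\nabla q(1)\|_\infty\|\eta_1-\eta_2\|_\infty\leq M\|\nabla q(1)\|_\infty\|\nu_1-\nu_2\|_D$ is legitimate. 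Your treatment of $\Psi$ itself (ODE reformulation, local Lipschitz right-hand side from admissibility, duality for the $H^{-1}$ momentum equation, Gr\"onwall) matches the paper and is fine; to repair the proof you should reorder the composition as above rather than try to prove Lipschitz continuity of $\mathcal{R}$ in the $L^2$ or $H^{-1}$ topology on $p_0$.
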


To show that $\mathcal{G}$ is Lipschitz, we first prove existence,
uniqueness and Lipschitz continuity for the maps $\Psi$ and
$\mathcal{R}$. This is then used to show that the observation operator
is Lipschitz continuous with respect to the normal component $p_0$ of
the initial conditions $p$ and the generator variable $\nu$. 

Let $B$ be the space of vector fields, $Q$ be a Hilbert space and $P$
be its dual. Here $Q$ represents a space of curves with a particular
norm, for example $Q = L^2(S_1,\R) $ or $Q= H^1(S_1,R)$. We want to
define the momentum map as given by
\begin{equation}
J : (p,q) \in P \times Q \mapsto J(p,q) :=\left( v \mapsto \left\langle p , v \o q \right\rangle_{P,Q} \right) \in B^* \,.
\end{equation}
{Here $B^*$ represents the dual space of $B$, the space of linear
  operators acting on $B$.}  In this aim, we assume in addition that
the map $B \times Q \mapsto Q$ defined by $(v,q) \to v \o q$ is a \lip
map on $Q$ for a fixed $v \in B$ namely there exists $C_c$ s.t.
\begin{equation} \label{LipCompAssump}
\| v \o q_1 - v \o q_2 \|_{Q} \leq C_c \| v \|_{B}  \|  q_1 -  q_2 \|_{Q} \,.
\end{equation}
As a consequence, since this composition map is linear on $B$, the map is \lip on $B \times Q$.
This assumption makes $J(p,q)$ well-defined and \lip on $P \times Q$ for the dual norm on $B^*$.

\begin{lemma} \label{MultiCases}
The assumption above is satisfied in the following situations
\begin{itemize}
\item $B$ is \adm[1] and $P = Q = L^2(S_1,\R)$.
\item $B$ is \adm[2], $P = H^{-1}(S_1,\R)$ and $Q = H^1(S_1,\R)$.
\end{itemize}
\end{lemma}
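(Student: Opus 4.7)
The plan is to verify the composition estimate \eref{LipCompAssump} in each case by controlling $v\circ q_1 - v \circ q_2$ pointwise via the mean value theorem and then integrating against the norm on $Q$. In both cases the admissibility hypothesis on $B$ is used to replace $L^\infty$-type bounds on derivatives of $v$ by multiples of $\|v\|_B$.

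For the first case ($P = Q = L^2$, $B$ 1-admissible), the mean value theorem applied to $v$ yields, at each $s \in S_1$,
\[
|v(q_1(s)) - v(q_2(s))| \leq \|Dv\|_\infty\,|q_1(s) - q_2(s)| \leq C_e\,\|v\|_B\,|q_1(s) - q_2(s)|.
\]
Squaring and integrating in $s$ gives the claim directly with $C_c = C_e$. This part is essentially routine.

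For the second case ($P = H^{-1}$, $Q = H^1$, $B$ 2-admissible), I need $H^1$ control of the composition. The $L^2$ component of the $H^1$ norm is estimated exactly as in the first case, since 2-admissibility trivially implies 1-admissibility. For the derivative, I would differentiate by the chain rule, $\partial_s(v \circ q_i) = Dv(q_i)\,q_i'$, and split the difference as
\[
Dv(q_1)\,q_1' - Dv(q_2)\,q_2' = [Dv(q_1) - Dv(q_2)]\,q_1' \;+\; Dv(q_2)\,[q_1' - q_2'].
\]
The second summand is immediate: $\|Dv\|_\infty \leq C_e\|v\|_B$ gives a bound by $C_e\|v\|_B\|q_1 - q_2\|_{H^1}$ in $L^2$. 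For the first summand, I would apply the mean value theorem to $Dv$, using $\|D^2 v\|_\infty \leq C_e\|v\|_B$ supplied by 2-admissibility, then invoke the one-dimensional Sobolev embedding $H^1(S_1) \hookrightarrow L^\infty(S_1)$ to bound $\|q_1 - q_2\|_\infty$ by $C_S\|q_1 - q_2\|_{H^1}$, while keeping $\|q_1'\|_{L^2} \leq \|q_1\|_{H^1}$ from the remaining factor. Collecting yields a bound of the form $C\,\|v\|_B\,\|q_1\|_{H^1}\,\|q_1 - q_2\|_{H^1}$ on the first summand.

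The main obstacle is precisely this trailing factor $\|q_1\|_{H^1}$: in case 2 the composition constant $C_c$ cannot be taken independently of the arguments, so the map $q \mapsto v \circ q$ on $H^1$ is only locally Lipschitz, with Lipschitz constant depending on an $H^1$ bound on one of the two curves. Reading \eref{LipCompAssump} as such a local estimate (which is how it is used downstream in Theorem \ref{Sh:Lip}, where $q$ ranges over bounded sets determined by the flow), the inequality is established. Beyond this subtlety, the proof rests only on two applications of the mean value theorem and the 1D Sobolev embedding, so no further obstacles are expected.
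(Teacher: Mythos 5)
Your proof is correct and follows essentially the same route as the paper: where you use the pointwise mean value theorem plus a chain-rule splitting and the embedding $H^1(S_1)\hookrightarrow L^\infty(S_1)$, the paper uses the integral form \eref{IntegralFormula} along the segment joining $q_1$ and $q_2$ together with the (equivalent, in one dimension) fact that $H^1(S_1)$ is a Banach algebra, and both arguments rest on the same two ingredients, namely $\|D^2v\|_\infty \leq C_e\|v\|_B$ from $2$-admissibility and the one-dimensional Sobolev embedding. The one point where you are more careful than the paper is your closing observation: the inequality \eref{SecondPoint} is stated with a constant independent of $q_1,q_2$, which is not attainable --- as your decomposition makes explicit, the term $\left[Dv(q_1)-Dv(q_2)\right]q_1'$ forces a factor of $\|q_1\|_{H^1}$, so in the $H^1$ case \eref{LipCompAssump} holds only locally, on bounded subsets of $Q$; the paper absorbs this only in the following lemma, by restricting to bounded balls and using conservation of the Hamiltonian, exactly as you anticipate.
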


\begin{proof}
For both points, we use the integration formula for the path $q(t) = t q_1 - (1-t) q_0$
\begin{equation} \label{IntegralFormula}
f(q(1)) - f(q(0)) = \int_0^1 df(q(t)) (q_1 - q_0) \, dt \,,
\end{equation}
for $f \in C^1$.
For the first point, for any $f \in C^{1,\infty}(\R^d,\R^d)$,
\begin{equation} \label{LipComp}
\| f \o q_1 - f \o q_2 \|_{L^2} \leq \|f\|_{1,\infty} \|q_1 - q_2 \|_{L^2} \,,
\end{equation}
which gives the result.  For the second point, we note that
$H^1(S_1,\R)$ is Banach algebra, namely for any couple $(f,g) \in
H^1(S_1,\R)$ we have $$\|fg\|_{H^1} \leq \|f\|_{\infty}\|g\|_{H^1} +
\|g\|_{\infty}\|f\|_{H^1} \leq 4\|f\|_{H^1}\|g\|_{H^1}\,.$$ Therefore
in the Formula \eref{IntegralFormula}, the term $ df(q(t)) (q_1 -
q_0)$ can be understood as the product (generalised to $\R^d$) of two
elements in $H^1(S_1,\R^d)$ provided that $ df(q(t))$ belongs to
$H^1$.  Moreover $H^1$ is stable under the composition with a $C^1$
function bounded for
\begin{equation}
\| f \o q \|_{H^1} = \|f\|_{1, \infty} \|q\|_{H^1}
\end{equation}
 The second point then follows from 
\begin{equation} \label{SecondPoint}
 \| f \o q_1 - f \o q_2 \|_{H^1} \leq \| f \|_{2,\infty} \| q_1 - q_2 \|_{H^1} \,,
\end{equation}
\end{proof}

We write Equations (\ref{e: u}-\ref{e:p dot}) as follows:
\begin{eqnarray}
\dot{q} &= &u \o q \label{composition} \\
\dot{p} &= &-\nabla u(q) . p \label{multiplication} \\
u&=&J(p,q)^\sharp\,, \label{mom map}  
\end{eqnarray} 
where $J(p,q)^\sharp$ denotes the corresponding element in $B$ given by the Riesz theorem, i.e.
$$ \langle J(p,q)^\sharp, v \rangle_{B} = J(p,q)(v) \,.$$
Equation \eref{multiplication} is a pointwise matrix multiplication when $p$ is smooth. When $p \in H^{-1}$ for instance, the operation  is defined by duality,
defined as follows.
\begin{definition}
Let $Q$ be a Banach algebra then the multiplication $Q^* \times Q \mapsto Q$ is defined by duality
\begin{equation}
\langle q_1. \,p , q_2 \rangle = \langle p , q_1 q_2 \rangle
\end{equation}
\end{definition}

\begin{remark} \label{LipMultiplication}
The multiplication on a Banach algebra is by definition bilinear continuous. As a consequence, the multiplication inherits the same properties and in particular, the \lip property.
\end{remark}

We now prove that these shooting equations are a simple ODE in
infinite dimensions i.e. $\dot{X} = F(X)$ with state variable $X$ in a
Banach space; This function $F$ satisfies the classical \lip
assumption which means that solutions can be obtained using Picard\rq{}s
fixed point method; it also ensures that the solutions are \lip functions
of the initial conditions.
\begin{lemma}
System (\ref{composition}-\ref{mom map}) is an ODE on $P \times Q$ which satisfies the \lip condition for the two following cases \SLC{
on bounded balls in $P \times Q$:}
\begin{itemize}
\item $B$ is \adm[2], $P=L^2$ and $Q = L^{\infty}$
\item $B$ is \adm[3], $P=H^{-1}$ and $Q = H^1$\,.
\end{itemize}
\end{lemma}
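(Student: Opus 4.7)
The plan is to write the shooting system as an autonomous ODE $\dot X = F(X)$ on the Banach space $P\times Q$, with $X=(p,q)$ and
\[
F(p,q) = \bigl(-\nabla u(q)\cdot p,\; u\o q\bigr), \qquad u = J(p,q)^\sharp,
\]
and to verify that $F$ is Lipschitz on every bounded ball in $P\times Q$.

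First I would analyse the map $(p,q)\mapsto u$. In both cases, the hypotheses on $B$, $P$, $Q$ make the pairing $\langle p,v\o q\rangle$ well-defined and bounded in $v\in B$, so that $J$ sends $P\times Q$ into $B^*$. Local Lipschitzness of $J$ then follows from Lemma \ref{MultiCases} combined with the ``add and subtract'' decomposition $J(p_1,q_1)-J(p_2,q_2) = J(p_1-p_2,q_1) + [J(p_2,q_1)-J(p_2,q_2)]$; case 1 additionally uses the embedding $L^\infty\hookrightarrow L^2$ on the compact parameter space, and the fact that 2-admissibility is stronger than the 1-admissibility required there. The Riesz isomorphism $B^*\to B$ is an isometry, so $(p,q)\mapsto u\in B$ is Lipschitz on bounded balls.

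Next I would handle the two components of $F$. For the configuration component $u\o q$, the composition estimates \eref{LipComp} (case 1, $Q=L^\infty$) and \eref{SecondPoint} (case 2, $Q=H^1$) together with admissibility of $B$ yield the Lipschitz bound on bounded balls via the standard split
\[
u_1\o q_1 - u_2\o q_2 = (u_1-u_2)\o q_1 + \bigl(u_2\o q_1 - u_2\o q_2\bigr).
\]
The more delicate momentum component is handled by the decomposition
\[
\nabla u_1(q_1)p_1 - \nabla u_2(q_2)p_2 = (\nabla u_1-\nabla u_2)(q_1)\,p_1 + \bigl(\nabla u_2(q_1)-\nabla u_2(q_2)\bigr)p_1 + \nabla u_2(q_2)(p_1-p_2).
\]
Each piece is then estimated in $P$. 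In case 1 this is direct, since the pointwise product satisfies $L^\infty\cdot L^2\subset L^2$ and 2-admissibility of $B$ controls $\nabla u$ and $\nabla^2 u$ uniformly in $L^\infty$. In case 2 the products are read through the duality multiplication of Remark \ref{LipMultiplication}: each coefficient $\nabla u_i(q_j)$ must lie in the Banach algebra $H^1$, with differences controlled either by $\|u_1-u_2\|_B$ or by $\|q_1-q_2\|_{H^1}$, which is exactly what the composition estimates \eref{LipComp} and \eref{SecondPoint} yield when applied to $\nabla u$ in place of $u$.

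The hardest step is this last one. The $H^{-1}$-interpretation of $\nabla u(q)\cdot p$ forces the coefficient $\nabla u\o q$ to act as a multiplier on $H^1$, which in turn demands that $\nabla u$ itself be $C^2$ with controlled second derivatives; this is precisely why the hypothesis in case 2 is strengthened from 2-admissible to 3-admissible, whereas case 1 can survive with only 2-admissibility because the multiplication there is pointwise. Once the Lipschitz bound on bounded balls is in place for $F$ in both components, the classical Picard iteration delivers the local existence, uniqueness and Lipschitz dependence on initial data alluded to in the paragraph preceding the lemma.
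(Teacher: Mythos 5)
Your Lipschitz estimates themselves track the paper's own argument closely: the same appeal to Lemma \ref{MultiCases} for the momentum map, the same add-and-subtract decompositions controlled by \eref{LipComp} and \eref{SecondPoint}, and the same use of the Banach-algebra duality multiplication (Remark \ref{LipMultiplication}) to absorb the factor of $p$; your observation that the $H^{-1}$ reading of $\nabla u(q)\cdot p$ is what forces the upgrade from $2$-admissible to $3$-admissible is exactly the point the paper is making. So for the literal statement --- that the right-hand side is Lipschitz on bounded balls of $P\times Q$ --- your argument is sound and essentially identical to the paper's.

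The gap is in your final sentence. A Lipschitz constant that depends on the radius $r$ of the ball, fed into Picard iteration, only yields existence on a short time interval depending on $r$; it does not by itself show that solutions survive to time $1$, which is what the time-1 flow map $\Psi$ and the subsequent Lemma \ref{LipDepShooting} require. The paper closes this with an a priori estimate you omit: the system is Hamiltonian with conserved energy $H(p,q)=\|J(p,q)^\sharp\|_B^2$, so $\|v(t)\|_B$ is controlled by the initial data for all time, and integrating \eref{LipCompAssump} then bounds $\|q(t)\|_Q$ (and similarly the momentum) by a quantity growing at most polynomially in $t$ and $r$. This keeps trajectories issued from $B(0,r)$ inside a controlled ball on $[0,1]$ and converts your local constant into a constant $M_{r,t}$ valid along solutions, which is what actually delivers global existence and, via Gr\"onwall, the Lipschitz dependence on initial conditions. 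Without some such a priori bound, the claim that ``Picard delivers the Lipschitz dependence on initial data'' is an overstatement.
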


\begin{remark}
The space $Q= L^{\infty}(S_1)$ is not a Hilbert space in this case but only a Banach space.
\end{remark}
\begin{proof}
  In the two claimed cases, we have by Proposition \ref{MultiCases}
  that $(p,q) \to J(p,q)^\sharp$ is \lip for the norm on $B$. Then
  using the inequality \eref{LipComp}, we deduce that $(p,q) \to
  J(p,q)^\sharp \o q$ is \lip as the composition of two \lip
  functions. Thus, we have treated the first component
  \eref{composition} of the ODE.

  Using Remark \ref{LipMultiplication}, it is then sufficient to prove
  that $(p,q) \in P \times Q \mapsto d[J(p,q)^\sharp](q) \in Q$ is a
  \lip mapping. Denoting $v_i = J(p_i,q_i)^\sharp$ for $i=1,2$, use of
  the triangle inequality gives
\begin{eqnarray}
 \|dv_1 \o q_1 - dv_2 \o q_2 \|_Q &\leq &C_e\|v_1 \|_{B} \|q_1 - q_2\|_Q +  \|dv_1 \o q_2 - dv_2 \o q_2 \|_Q \nonumber \\
& \leq& C_e\|v_1 \|_{B} \|q_1 - q_2\|_Q +  C_e\|v_1-v_2\|_{B} \|q_2\|_{q}\,, \label{LipIneq}
\end{eqnarray}
where we rely on the inequality \eref{SecondPoint} for the $H^1$ case and \eref{LipComp} for the first case.

Up to this point, we only have proven a local \lip condition. It
remains to prove that for $(p_0,q_0) \in B(0,r) \subset P \times Q$
with $r$ a positive real number, there exists a constant $M_{r,t}$
(the subindex indicating that this constant only depends on $r$ and
the time $t$) such that for all time $t\geq0$ the system
(\ref{composition}-\ref{mom map}) is \lip of \lip constant $M_r$. In
particular this implies that the solutions are defined for all
times. To this end, we remark that the geodesic equations
(\ref{composition}-\ref{mom map}) are variational with the Hamiltonian
is constant in time. This can be checked by a straightforward
calculation. Since the Hamiltonian reads $H(p,q) =
\norm{J(p,q)^\sharp}_B^2$, the \lip inequality \eref{LipIneq} becomes
\begin{equation}
 \|dv_1 \o q_1 - dv_2 \o q_2 \|_Q \leq 3r\, C_e\, \|q_1 - q_2\|_Q \,.
\end{equation}
For the equation \eref{composition}, the inequality \eref{LipCompAssump} implies
\begin{equation}
\norm{v_1 \circ q_1 - v_2 \circ q_2} \leq r C_c \norm{q_1-q_2}_{Q} + \norm{v_1 - v_2}_B \max{(\norm{q_1}_Q,\norm{q_2}_Q)}
\end{equation}
but by direct integration of Inequality \eref{LipCompAssump} we see 
that $\max{(\norm{q_1}_Q,\norm{q_2}_Q)} \leq C_c \,r\, t$ for any time
$t>0$.  A similar inequality holds for the momentum $p$, hence the
geodesic equations (\ref{composition}-\ref{mom map}) are \lip for a
constant $M_{r,t}$ that has polynomial growth in $t$ and in $r$.
\end{proof}

We now claim the following result which is based on Gr\"{o}nwall's lemma.
\begin{lemma}\label{LipDepShooting}
  Solutions of the geodesic equations (\ref{composition}-\ref{mom
    map}) have a \lip dependency w.r.t. their initial conditions for
  their respective norms.
\end{lemma}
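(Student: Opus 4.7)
The plan is to deduce this lemma directly from the previous one by invoking the standard Gr\"onwall argument for ODEs in Banach spaces, once I have arranged that both compared trajectories live in a common bounded ball whose radius grows only polynomially in time.

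First, fix $r>0$ and consider two initial conditions $(p_1^0,q_1^0)$ and $(p_2^0,q_2^0)$ lying in the ball $B(0,r)\subset P\times Q$. Write the system as $\dot X_i=F(X_i)$ with $X_i=(p_i,q_i)$. The previous lemma, together with the conservation of the Hamiltonian $H(p,q)=\|J(p,q)^\sharp\|_B^2$, already implies that the two solutions are defined for all times and that their norms grow at most polynomially: more precisely, since $\|v_i(t)\|_B^2=H(p_i^0,q_i^0)$ is bounded by a constant depending only on $r$, integration of the estimates in \eqref{LipCompAssump} and of \eqref{multiplication} yields a radius $R(r,t)$, polynomial in $r$ and $t$, such that $X_1(s),X_2(s)\in B(0,R(r,t))$ for every $s\in[0,t]$. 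I would record this a priori bound as a preliminary step.

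Second, on the ball $B(0,R(r,t))$ the vector field $F$ is Lipschitz with constant $M_{R(r,t),t}$ given by the previous lemma, and this constant is polynomial in $t$ (and in $r$). Subtracting the two differential equations and taking norms in $P\times Q$ gives
\begin{equation}
\|X_1(t)-X_2(t)\|\;\leq\;\|X_1(0)-X_2(0)\|+\int_0^t M_{R(r,s),s}\,\|X_1(s)-X_2(s)\|\,\mathrm{d}s.
\end{equation}
Gr\"onwall's lemma then yields
\begin{equation}
\|X_1(t)-X_2(t)\|\;\leq\;\|X_1(0)-X_2(0)\|\,\exp\!\Bigl(\int_0^t M_{R(r,s),s}\,\mathrm{d}s\Bigr),
\end{equation}
which is exactly the required Lipschitz dependence of the time-$t$ flow on initial data, with a Lipschitz constant that depends only on $r$ and $t$.

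The only mild subtlety I anticipate is in the case $P=L^2$, $Q=L^\infty$, where $Q$ is not Hilbert; however all I use is that $P\times Q$ is a Banach space and that $F$ is Lipschitz there, so the Banach-space version of Gr\"onwall applies verbatim. A minor bookkeeping point is to verify that the norm of $p$ is controlled analogously to that of $q$ via \eqref{multiplication}, using Remark \ref{LipMultiplication} and the conserved bound on $\|v\|_B$; this is the step I would write out carefully, while the Gr\"onwall conclusion itself is entirely routine. Hence the main work has already been done in the previous lemma, and this result follows as a corollary.
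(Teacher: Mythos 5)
Your argument is correct and is precisely the one the paper intends: the paper gives no written proof of this lemma, merely asserting that it ``is based on Gr\"{o}nwall's lemma,'' and your combination of the a priori bounds from Hamiltonian conservation (already established in the preceding lemma, with the polynomially growing constant $M_{r,t}$) with the standard Banach-space Gr\"{o}nwall estimate supplies exactly the intended argument. Your remarks on the $Q=L^\infty$ case and on controlling $\|p\|_P$ via the multiplication bound are the right points to be careful about, and they go through as you describe.
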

We are now in a position to prove Theorem \ref{Sh:Lip}.
\begin{proof}
The observation operator $\mathcal{G}$ is the evaluation at points $(\eta(s_i))_{i=1}^n$ of the deformed template $q(1)$ obtained by the geodesic equations (\ref{composition}-\ref{mom map}) without reparameterisation. In other words,
\begin{equation} \label{EvaluationOperator}
\textrm{Ev}(p_0,q_0,\nu) =  
\left(
\begin{array}{c}
q(1) \circ \eta (s_1)  \\
\vdots \\
q(1) \circ \eta (s_n)  \\
\end{array}\right)
\end{equation}
where $q(1)$ is the solution at time $1$ of the geodesic equations
(\ref{composition}-\ref{mom map}) for initial data $p_0,q_0$ and
$\eta$ is the Lie exponential of $\nu$.  To prove that the observation
operator is \lip in those three variables, we observe that it is the
composition of the flow of the geodesic equations
(\ref{composition}-\ref{mom map}) and the right composition with the
reparameterisation by $\eta$.  The first operation is \lip continuous
by Corollary \eref{LipDepShooting}. Now the right composition is also
\lip continuous since $q(1) \in W^{1,\infty}$ (indeed since $B$ is
\adm[2] or \adm[3], the flow of the geodesic equations
(\ref{composition}-\ref{mom map}) preserves $W^{1,\infty}$) we have
\begin{equation}
\| q(1) \o \eta_1 - q(1) \o \eta_2 \|_{\infty} \leq \| \nabla q(1) \|_{\infty} \| \eta_1 - \eta_2 \|_{\infty} \leq  M\|  \nabla q(1) \|_{\infty} \| \nu_1 - \nu_2 \|_{D} \,.
\end{equation}
\end{proof}

\section{Bayesian Inversion}\label{sec:bayes}
\SLC{ In this section, we aim to frame the ill-posed inverse problem
  of finding $(p_0,\nu)$ given noisy observations of the target shape,
  as a well-posed Bayesian inverse problem on function space. The
  ``solution'' to this inverse problem is then given by a probability
  distribution. Bayes' formula allows us to construct, through
  combining prior beliefs about the functions, with observations of
  the system, the \emph{posterior} distribution of the possible values
  that the unknown functions could have taken.

\subsection{Prior Distributions}

  Extra care must be taken in this context, since the unknowns are
  functions, and as such the problem must be formulated
  non-parametrically. Following the philosophy of \cite{Stuart_Bayes},
  we use the results from Section \ref{sec:prop} to identify minimum
  regularity priors which will ensure that the problem is
  well-defined. For problems on infinite dimensional spaces, we must
  identify prior distributions for the functions, which have full
  measure with respect to function spaces on which the observation
  operator is Lipschitz continuous. Theorem \ref{Sh:collated}
  summarises these results for the problem presented in this paper,
  and follows at the end of this section.

  The role of the prior in this respect is two-fold. The prior can
  contain prior information or beliefs about the form or structure of
  the solution, possibly taken from previous observations. However
  another important role of the prior is to regularise the problem, to
  make it well-posed. Indeed it plays a very similar role to the
  penalty term in a Tikhonov regularisation, an optimisation
  formulation of the inverse problem. We will explore these
  similarities in Section \ref{sec:RWMH}, and exploit them in our
  numerical method.

\subsection{Observations and the Likelihood}

Uncertainty in this problem arises from errors incurred during
observation. Observational noise is often modelled as being additive
Gaussian. This uncertainty in the exact shape of the curve leads to
uncertainty in the value of the pair of functions which would deform
the reference shape to the observed shape. Understanding and
quantifying that uncertainty is a key part of solving this problem, as
naive least squares matching to the data would lead to an ill-posed
problem.

We assume that observations $y$ of the quantity of interest are noisy
in nature, satisfying:
\[
y = \cG(p_0,\nu) + \xi, \qquad \xi \sim \cN(0,\Sigma),
\]
where $\Sigma$ is assumed to be known, and where $\mathcal{G}$ is
defined to be the observation operator given by \SLC{\eref{e: observation}}.
That is, the function which returns to us what noiseless observations
we would make if we were to deform the template shape and
reparameterise using the function pair $(p_0,\nu)$.
This assumption allows us to compute the
likelihood that $y$ was observed with a given $(p_0,\nu)$, through the
probability density function of $\xi$:
\begin{equation}\label{eq:like}
\Pbb(y|p_0,\nu) \propto \exp\left ( -\frac{1}{2}\|\cG(p_0,\nu) -
  y\|^2_\Sigma \right ),
\end{equation}
where $\| x \|^2_\Sigma:= x^T\Sigma^{-1}x$ is the covariance weighted
norm.

\subsection{Bayes' Theorem and the Posterior}

Bayesian inversion is one way to regularise an ill-posed inverse
problem, and convert this into a well-posed one. Bayes' formula is
central to this. Prior beliefs about the quantities of interest are
blended with data from observations to give the posterior
$\mathbb{P}(u|y)$ distribution. The finite dimensional version of this
formula is
\begin{equation*}
\mathbb{P}(u|y) \propto \mathbb{P}(y|u)\mathbb{P}(u),
\end{equation*}
where $u$ denotes the unknown quantity that we wish to characterise
from the data, $\mathbb{P}(u)$ the probability distribution describing
our prior beliefs about those quantities, and
$\mathbb{P}(y|u)$ the likelihood function, which returns the
likelihood that we would make the observations $y$ given a
particular value of the quantity $u$.

{However, in this paper the quantity of interest is a pair of
  functions $u = (p_0,\nu)$.  In this infinite dimensional setting, we
  must use the analogous result regarding the Radon-Nikodym derivative
  of the posterior distribution with measure $\mu$, with respect to
  the prior distribution on $(p_0,\nu)$ with measure $\mu_0$:
\begin{eqnarray}
\label{Sh:RadNik}
  \frac{d\mu(p_0,\nu)}{d\mu_0} \propto \mathbb{P}(y|(p_0,\nu)),
\end{eqnarray}
where the likelihood expression is given by \eref{eq:like}.}

Note that this formula only holds if the posterior is indeed
absolutely continuous with respect to the prior distribution,
otherwise no such derivative is admitted. Sufficient conditions on the
priors for this to be satisfied are given in the following section.


}

\subsection{Prior and Posterior Distributions on
  $(p_0,\nu)$}\label{sec:prior}

\SLC{Before stating the main result of this Section, we must first
  introduce some notation. Let us consider the Helmholtz operator
with periodic boundary conditions

\[
\cH = I -\ell^2\pp{^2}{s^2},
\]
where $\ell \in \mathbb{R}$ defines a \emph{characteristic length
  scale}. Note also that for samples $u \sim \mathcal{N}(0,\delta
\mathcal{H}^{-\alpha})$ for some $\alpha > \beta + \frac{d}{2}$ for
some $\beta \ge 0$, then $u \in H^\beta$ almost
surely (follows from \cite{Stuart_Bayes}). The length scale parameter $\ell \in
\bbR^+$ allows us to control at which scale the smoothing properties
of the Laplacian take effect. With a larger value of $\ell$, a larger
value of $|k|$ is required before the effect of the Laplacian becomes
dominant. Likewise, as $\ell \to 0$, $\cH \to -\Delta$, meaning that
the Laplacian is dominant on all scales. The choice of this value,
however, does not affect the overall regularity of samples drawn from
the distribution $\cN(0,\cH^{-\alpha})$.

\begin{theorem}\label{Sh:collated}
  Let $B$ be 3-admissable, $D$ be 1-admissable, and $\mu_0(p_0,\nu) =
  \mathcal{N}(0,\delta_1\cH_1^{-\alpha_1})\times
  \mathcal{N}(0,\delta_2\cH_2^{-\alpha_2})$ for
  $\alpha_1>-\frac{1}{2}$, $\alpha_2>3/2$, $\delta_1,\delta_2 \neq 0$,
  and where $\cH_i = (\ell_iI - \Delta)$ where $\ell_i \in \bbR^+$ for
  $i\in \{1,2\}$. Then $\cG$ is measurable with respect to $\mu_0$,
  and the posterior measure $\mu$ is absolutely continuous with
  respect to $\mu_0$, with Radon-Nikodym derivative given by
  (\ref{Sh:RadNik}).
\end{theorem}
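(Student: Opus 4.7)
The plan is to reduce the statement to the Bayesian well-posedness framework used in \cite{Stuart_Bayes,cdrs08,cds10,cds11}: once I can show that $\mu_0$ is supported, in the sense of almost-sure regularity, on the product space $H^{-1}\times D$ on which $\mathcal{G}$ is Lipschitz by part (2) of Theorem \ref{Sh:Lip}, measurability of $\mathcal{G}$ is automatic and the Radon-Nikodym formula (\ref{Sh:RadNik}) follows from the boundedness of the negative log-likelihood. So there are really three separate things to verify.

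First I would check the almost-sure regularity of samples from each factor of $\mu_0$. On $S^1$ the Helmholtz operator $\mathcal{H}_i = \ell_i I - \Delta$ is diagonal in the Fourier basis with eigenvalues of order $(1+k^2)$, so a sample $u\sim\mathcal{N}(0,\delta\mathcal{H}^{-\alpha})$ admits a Karhunen--Lo\`eve expansion whose Fourier coefficients $\hat u_k$ satisfy $\mathbb{E}|\hat u_k|^2 \asymp (1+k^2)^{-\alpha}$. A standard computation (Fernique's theorem or a direct Borel--Cantelli argument) then gives $u\in H^\beta$ almost surely whenever $\alpha>\beta+\tfrac{1}{2}$. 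Applied to the first factor with $\beta=-1$ this gives $p_0\in H^{-1}$ almost surely under the condition $\alpha_1>-\tfrac12$, and applied to the second factor with any $\beta$ below $\alpha_2-\tfrac12$ together with the Sobolev embedding $H^\beta(S^1)\hookrightarrow C^1(S^1)$ for $\beta>\tfrac32$ it gives $\nu\in D$ almost surely for any $1$-admissible choice of $D$ containing $H^\beta$ for some $\beta\in(\tfrac32,\alpha_2-\tfrac12)$. This is where the hypothesis $\alpha_2>\tfrac32$ is used (combined with choosing $D$ as an appropriate $1$-admissible Hilbert space); $q^1\in W^{1,\infty}$ is assumed for the template.

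Second, with $B$ $3$-admissible, $D$ $1$-admissible, $p_0\in H^{-1}$ and $q^1\in W^{1,\infty}$, part (2) of Theorem \ref{Sh:Lip} directly yields that $\mathcal{G}: H^{-1}\times D \to \mathbb{R}^{2n}$ is Lipschitz continuous, hence Borel-measurable on the support of $\mu_0$, and a fortiori $\mu_0$-measurable. Thus the potential
\[
\Phi(p_0,\nu) := \tfrac{1}{2}\|\mathcal{G}(p_0,\nu)-y\|^2_\Sigma
\]
is continuous and nonnegative on $H^{-1}\times D$, so $0\le e^{-\Phi}\le 1$ everywhere on the support of $\mu_0$.

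Third, to apply Bayes' theorem I need the normalising constant $Z=\int e^{-\Phi}\,d\mu_0$ to satisfy $Z\in(0,\infty)$. Finiteness is trivial from $e^{-\Phi}\le 1$. Positivity follows because $\mathcal{G}$ is continuous and $y$ takes a finite value in $\mathbb{R}^{2n}$, so $\Phi$ is bounded on every bounded set of $H^{-1}\times D$; combined with the fact that the Gaussian product measure $\mu_0$ charges every open set of its support, this gives $Z>0$. Invoking the abstract Bayesian framework of \cite{Stuart_Bayes} (specifically the version used in \cite{cds10,cds11}) then gives absolute continuity of $\mu$ with respect to $\mu_0$ with Radon--Nikodym derivative $Z^{-1}e^{-\Phi}$, which is exactly (\ref{Sh:RadNik}). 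The only delicate point, and the part I expect to require care, is aligning the exponents $\alpha_1,\alpha_2$ with the Sobolev embedding on $S^1$ and with the $3$-admissibility of $B$ and $1$-admissibility of $D$ so that the function space on which Theorem \ref{Sh:Lip}(2) is applied genuinely has full $\mu_0$-measure.
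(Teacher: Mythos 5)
Your overall strategy is exactly the paper's: the printed proof of Theorem \ref{Sh:collated} is the single sentence ``Result follows by the Sobolev embedding theorem, results in \cite{Stuart_Bayes}, and Theorem \ref{Sh:Lip}'', and your three steps (almost-sure regularity of prior samples, Lipschitz continuity of $\cG$ on a set of full prior measure via Theorem \ref{Sh:Lip}(2), then the abstract Bayes theorem with $Z\in(0,\infty)$) are precisely the intended expansion of that sentence. Steps two and three are fine.

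There is, however, a concrete gap in your first step, at exactly the point you flagged as ``delicate'' but did not resolve. You need $\nu$ to lie almost surely in a $1$-admissible space $D$, i.e.\ $D\hookrightarrow C^1$, and you obtain this by asking for $\nu\in H^\beta$ with $\beta>\tfrac32$ (Sobolev embedding on $S^1$) while the regularity result for $\mathcal{N}(0,\delta\cH^{-\alpha_2})$ only gives $\nu\in H^\beta$ for $\beta<\alpha_2-\tfrac12$. The interval $\bigl(\tfrac32,\alpha_2-\tfrac12\bigr)$ that you invoke is nonempty only when $\alpha_2>2$, so your assertion that ``this is where the hypothesis $\alpha_2>\tfrac32$ is used'' is not correct: for $\alpha_2\in(\tfrac32,2]$ your argument does not produce any $1$-admissible $D$ of full measure, and samples are then only in $H^{\beta}$ for some $\beta<\tfrac32$, hence H\"older continuous but not $C^1$ (nor even Lipschitz, which is what the final estimate in the proof of Theorem \ref{Sh:Lip} and the well-posedness of the Lie-exponential ODE \eref{e: lie} actually require). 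As written, your proof therefore establishes the theorem only under the stronger hypothesis $\alpha_2>2$; to prove it with $\alpha_2>\tfrac32$ as stated you would need either a weaker admissibility requirement on $D$ in Theorem \ref{Sh:Lip} (e.g.\ an embedding into $C^{0,1}$ or $C^0$ obtained by a different argument for the reparameterisation step) or a sharper almost-sure regularity statement than $H^{\alpha_2-\frac12-\epsilon}$. You should either close this discrepancy or state explicitly that your argument requires $\alpha_2>2$.
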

\begin{proof}
  Result follows by the Sobolev embedding theorem, results in
  \cite{Stuart_Bayes}, and Theorem \ref{Sh:Lip}.
\end{proof}

For the posterior distribution to be well-defined, it must be
absolutely continuous with respect to the choice of prior distribution
on the two unknown functions of interest. This choice of prior
distribution is informed by the properties of the forward model, and
the observation operator $\mathcal{G}$. Theorem \ref{Sh:collated}
gives us Gaussian priors which have sufficient regularity to ensure
that this condition is met.}

\section{Characterising the Posterior Density}\label{sec:MCMC}
In this section we introduce the statistical methods which we use in
order to characterise the posterior distribution of the function pair
$(p_0,\nu)$ given a choice of prior distribution and a set of noisy
observations of the target shape. Monte Carlo Markov chains (MCMC) are
a set of tools which allow us to sample from a desired probability
distribution. A chain of states is formed, which converges in
distribution to the target density. {We then describe a method for
initialising the MCMC method efficiently.}

\subsection{The Function Space Random Walk Metropolis-Hastings
  algorithm (pCN)}\label{sec:RWMH}
For the purpose of the numerics, we use a version of the Random Walk
Metropolis Hastings (RWMH) algorithm framed on function spaces,
\SLC{referred to as the pCN algorithm\cite{Stuart_Bayes,crsw11}.} This
method is superior to the usual vanilla random walk method in that it
is well-defined on function spaces, and as such is robust under
refinement of any discretisation of the forward model
\cite{crsw11}. This means that the rate of convergence of statistics
in the Markov chains are independent of this discretisation, whereas
the Metropolis Hasting algorithm with vanilla random walk proposal
degenerates as the forward model's discretisation is refined. The
following describes the pCN sampler, where $\Phi =
\frac{1}{2}\|\cG(p_0,\nu)-y\|^2_\Sigma$:
\begin{algorithmic}[1]
  \STATE $u_0 = (p_0,\nu)_0$ \FOR{$i=1:N$} \STATE Sample $v =
  (1-\beta^2)^{1/2}u_{i-1} + \beta w$, where $w \sim \mu_0((p_0,\nu))$
  \STATE $a(u_{i-1},v) = \min \left \{ 1, \exp(\Phi(u_{i-1}) -
    \Phi(v)\right \}$ \STATE Sample $u \sim U([0,1])$ \IF{$u <
    a(u_{i-1},v)$} \STATE $u_i = v$ \ELSE \STATE $u_i = u_{i-1}$
  \ENDIF
  \ENDFOR
\end{algorithmic}
\SLC{
The acceptance probability in this algorithm is independent of the
approximation method used for the unknown functions, and as such has
superior mixing properties as compared with standard methods for finer
discretisations. Further to this basic algorithm, we can alter this to also
use the data to infer on the variance of the observational noise, if
we no longer assume that this is known\cite{crsw11}; we make use of
this in the numerical experiments in Sections \ref{sec:PC} and
\ref{sec:par}.}

\SLC{
MCMC methods usually need a period of ``burn-in'' in which we try to
ensure that the chain starts in a region which is not in the tails of
the posterior distribution. MAP estimators can be used to try to find
regions of high posterior density, as described in \cite{cdrs08}. For
the problem as we have defined it, this equates to finding the
solution to:}
\[
\min_{p_0 \in L^2, \nu\in H^2} L(p_0,\nu) = \min_{p_0 \in L^2, \nu\in H^2} \frac{1}{2}\|\cG(p_0,\nu) -
y\|^2_\Sigma + \frac{1}{2}\|(p_0,\nu)\|^2_{\mu_0}
\] 
where $\|(\cdot,\cdot)\|_{\mu_0}$ is the equivalent penalty term to
that induced in the posterior measure by the choice of prior measure
$\mu_0$, or the \emph{Cameron-Martin} norm corresponding to
$\mu_0$. In particular, if (as is the case for our purposes)
$\mu_0(p_0,\nu) = \mathcal{N}(0,\delta_1\cH^{-\alpha_1})\times
\mathcal{N}(0,\delta_2\cH^{-\alpha_2})$, then
\begin{eqnarray*}
\|(p_0,\nu)\|^2_{\mu_0} &=& \sum_k \delta_1 |p_k|^2 (\ell_1 - |k|^2)^{\alpha_1}
+ \delta_2 |\nu_k|^2 (\ell_2 - |k|^2)^{\alpha_2}
\end{eqnarray*}


Various descent methods can be used to try to find local
minima of this quantity. Most of these methods incorporate gradient
information of some type to attempt to search for the minima in
appropriate directions. These methods include steepest descent and
conjugate gradient among others.

The method that we will utilise in our numerics in order to initialise
our Markov chains is called the Broyden-Fletcher-Goldfarb-Shanno
(BFGS) method \cite{broyden70,shanno85}. This method uses the gradient
information from the last two states in the chain to approximate a
Hessian matrix, which is then used to choose an appropriate direction
in which to search for the local minimum.

{Once the BFGS method has converged within certain tolerances, the
  state that has been settled on can be used as an initial state in
  the MCMC chain. Assuming that the BFGS method has converged close to
  a global or even local maxima of probability density, then the chain
  will more quickly enter probabilistic equilibrium. For example,
  consider the data used in Section \ref{sec:par}. \SLC{The ratio of
  probability densities of the state $(p_0,\nu) = (0,0)$ with the
  state the BFGS method converged to was $\exp(-958.85)$, a huge
  difference.} 
}

\section{Numerical Approximation of $\cG$}\label{sec:NA}
The observation operator consists of the concatenation of four maps:
1) the generation of the reparameterisation variable $\eta$ from the
vector field $\nu$ according to \eref{e: lie}, 2) the application of
the reparameterisation formula according to \eref{reparam}, 3) the
time-1 flow map of the geodesic equations according to (\ref{e:
  u}-\ref{e:p dot}), and 4) the evaluation of the curve $q|_{t=1}$ at
the finitely chosen points. Stages (1) and (3) are both numerically
discretised using particle-mesh methods as described in \cite{Co2008}.
These methods allow efficient evaluation of the velocity field at
particle locations whilst preserving variational structure and making
it possible to compute the exact adjoint equations of the discrete
system which are used in the implementation of the deterministic
burn-in. Stages (2) and (4) use cubic B-spline interpolation, which
also make computing the adjoint tractable. Details of the numerical
approximation procedure, along with numerical tests that show that the
numerical scheme is second-order convergent in space, are given in
\cite{CoClPe2011}. {In summary, the discretisation replaces the
  parameterised curve $q(s)$ as an ordered discrete set of points
  $\{q_i\}_{i=1}^{n_p}$, similarly the conjugate momentum $p(s)$ is
  replaced by a set ${p_i}_{i=1}^{n_p}$ of (co)-vectors associated
  with each point. Convergence is achieved as $n_p\to\infty$, assuming
  a sufficiently smooth velocity field; the convergence rate was
  demonstrated to be second-order in \cite{CoClPe2011}. The velocity
  field $u$ is discretised on a static, uniform mesh with $n_g\times
  n_g$ grid points. In the numerical algorithm, the momentum is
  interpolated from the points to the grid using cubic B-splines; the
  elliptic operator A is then approximately inverted on the grid using
  discrete Fourier transform, and the resulting velocity is
  interpolated back to the points $q_i$. As described in
  \cite{Co2008}, this interpolation preserves the Hamiltonian
  structure of the equations, and as described in \cite{CoClPe2011},
  allows an exact adjoint of the numerical model to be constructed,
  which is used to obtain derivatives of the observation
  operator. Convergence was also demonstrated to be second-order as
  $n_g\to\infty$ in \cite{CoClPe2011}. Alternatively, one can select a
  finite value for $n_g$ whilst keeping $q$ and $p$ continuous; this
  just leads to a modification of the operator $A$. The discretisation
  for the reparameterisation variable $\nu$ was introduced in
  \cite{CoClPe2011}. The reparameterisation velocity $\nu(s)$ is also
  discretised as a discrete set of values $\{\nu_i\}_{i=1}^{n_p}$, and
  the reparameterisation map is obtained by interpolating $\nu$ to the
  reparameterised locations; $q$ and $p$ are then pulled back under
  the reparameterisation map by interpolation. As discussed in
  \cite{CoHo2010}, reparameterisation and flow along geodesics are
  commuting operations; in \cite{CoClPe2011} the commutation error was
  shown to converge to zero at second-order as $n_p\to\infty$.}


\section{Numerical Results}\label{sec:num}
We present numerical results to show that the algorithm is
successfully drawing samples from the posterior distribution, and then
go on to study the posterior distribution in certain data
scenarios. The first thing to address is our choice of template shape
$\Gamma^1$ and the parameterisation $q^1(s)$ that we use for this
shape. Since we are only considering closed curves, it seems natural
to choose a circle for this to keep things as simple as possible. We
also wish to choose a nice smooth parameterisation for this shape,
centred in the middle of our domain $\mathbb{T}^2 = [0,2\pi)^2$ so we
pick
\begin{equation}\label{template}
  q^1(s) = (\cos(s) + \pi ,\sin(s) + \pi), \qquad s\in [0,2\pi).
\end{equation}
We will engage in simulation studies in which the data is itself
produced by employing the numerical simulation of a forward PDE
model. In all the numerics, we assume that the noise $\xi$ through
which we make the observations
\[
y = \cG(p_0,\nu) + \xi, \qquad \xi \sim \cN(0,\Sigma),
\]
has a diagonal covariance matrix $\Sigma$, which we describe for each
experiment.

In terms of the approximation of the forward model in the algorithm,
50 time steps are used in which to deform the parameterisation of the
shape from time $t=0$ to $t=1$. The curves themselves are approximated
by 100 points, and with a $64\times 64$ grid approximating the
underlying velocity field. The values of $s_j$ at which observations
are made are given by $\left \{\frac{2\pi s_j}{N} \right
\}_{j=0}^{N-1}$. These parameters are used in the model to create the
data, and in the implementation of the statistical algorithm.

The prior distributions on $p_0$ and $\nu$ are
$\mathcal{N}(0,\delta_1\cH^{-\alpha_1})$ and
$\mathcal{N}(0,\delta_1\cH^{-\alpha_1})$ respectively, with $\alpha_1
= 0.55$, $\alpha_2 = 1.55$, $\delta_1 = 30$ and $\delta_2 = 0.05$. Note
that these parameters are sufficient to ensure that Corollary
\ref{Sh:collated} holds.

In the following sections we plot normalised histograms for particular
Fourier modes of the two functions of interest, from converged Markov
chains which are invariant with respect to the posterior distribution
as described in Section \ref{sec:MCMC}.

\subsection{Posterior Consistency}\label{sec:PC}
It seems reasonable to expect that as we increase the amount of
informative data that we are using in our inference, the closer our
posterior mean will be to the functions that created the data, and
that at the same time the uncertainty in that estimation will
decrease. \SLC{Posterior
  consistency is not yet a developed topic in inverse problems, with
  most theory developed only in the Gaussian (and often linear)
  cases\cite{Agapiou_post_con,Knapik_BIPWGP,knapik_heat_eq}. As such
  we}{ do not aim to prove posterior consistency in this
  context, but instead aim to present results which appear to indicate that
  this is indeed the case in this application.} In this set of
numerical experiments, we take a draw $(p_0,\nu)$ from the prior
measure, and using our approximation of the forward model, create data
$y$ such that
\[
y = \left \{ q^2\left ( \frac{2\pi n}{N} \right ) \right
\}_{n=0}^{N-1} + \xi, \qquad \xi \sim \cN(0,\Sigma), 
\]
with increasing $N$.

In this experiment, the data is simulated using the numerical
approximation of the forward model, {but in order to avoid
  committing an \emph{inverse crime}\cite{JKES} we use a much higher
  resolution of the curve when simulating the data. 1000 points were
  used to describe the curve in the data creation routines, while just
  100 points were used within the approximation used for the MCMC
  algorithm.} The initial momentum and reparameterisation functions
that created the data were drawn from the prior
distributions. Mean-zero Gaussian noise with covariance matrix $\Sigma
= \sigma^2 I$ with $\sigma = 10^{-2}$ was added, {but was not
  assumed to be known. In the results that follow, the MCMC method
  described in Section \ref{sec:RWMH} was used to sample from the
  joint distribution on the functions of interest $(p_0,\nu)$, and the
  observation noise variance $\sigma^2$.

\begin{figure}[htp] 
\begin{center} 
\mbox{ 
\subfigure[]{\scalebox{0.31}{\includegraphics{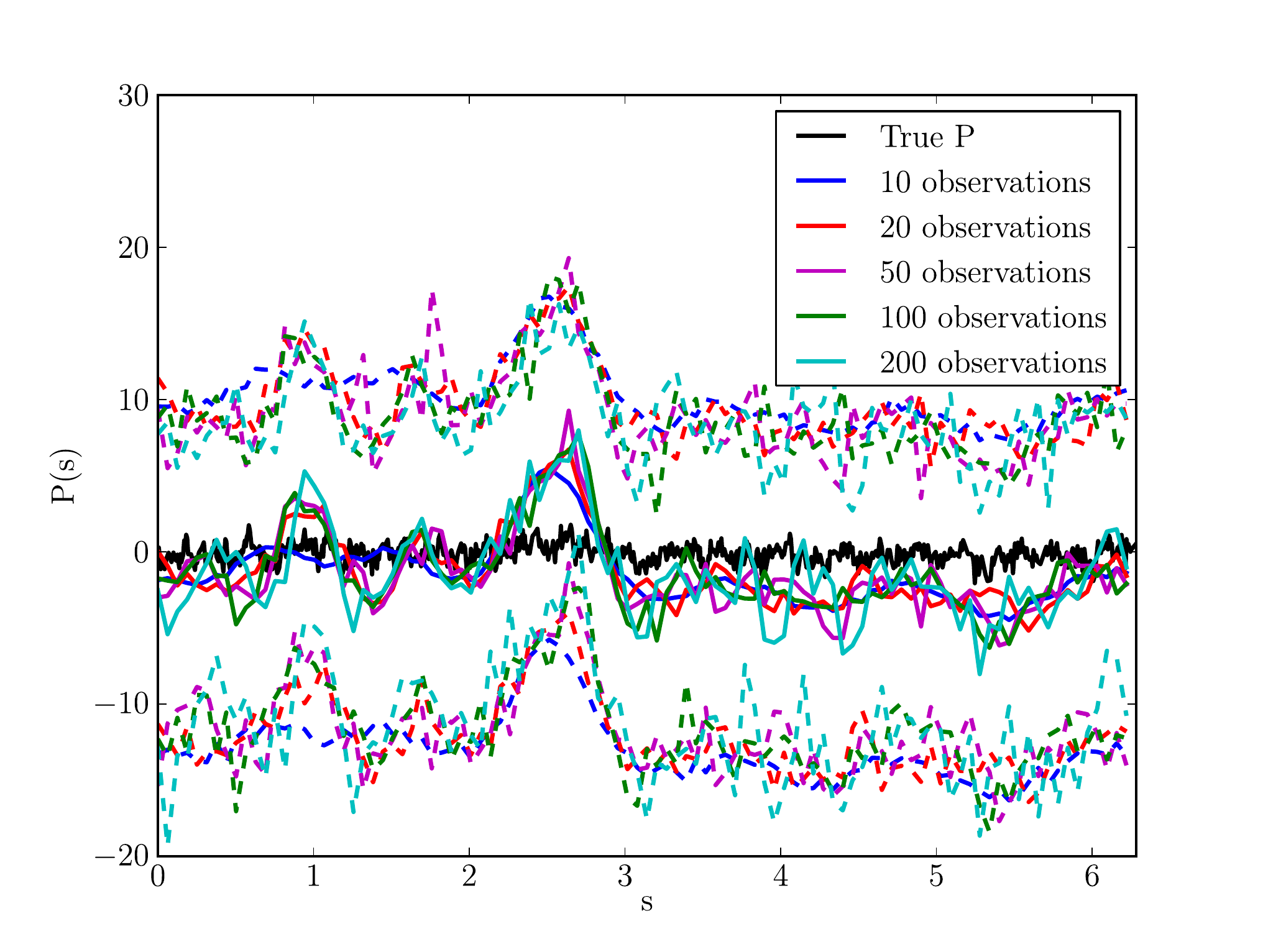}}} 
\subfigure[]{\scalebox{0.31}{\includegraphics{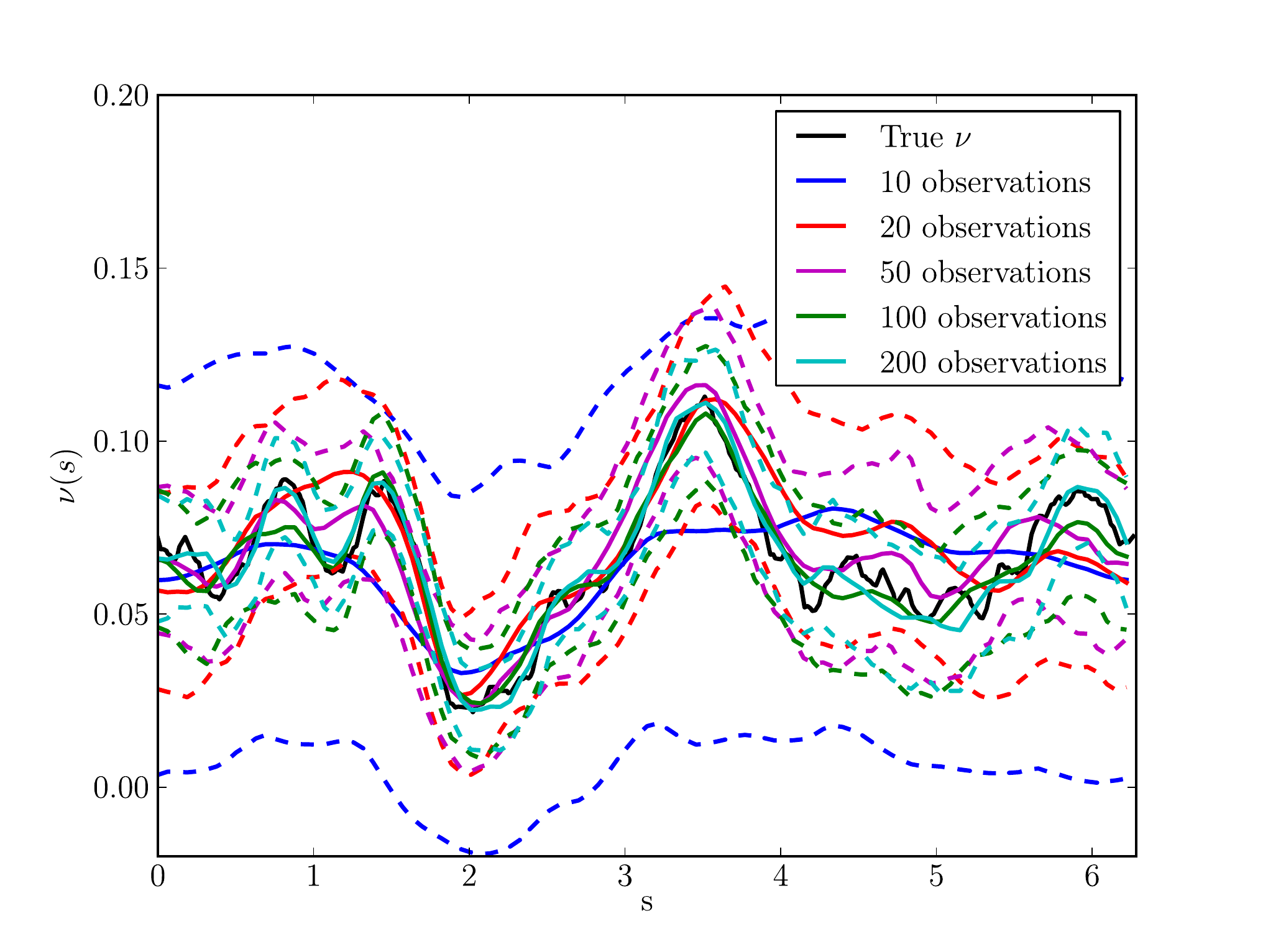}}} 
} 
\caption{\it Plots of the mean values (solid lines) of (a) $p_0$ and
  $\nu$ occurring in the posterior distribution using data with
  increasing number of points, with observational noise covariance
  equal to $\sigma I$ with $\sigma=0.01$. The dashed lines denote $\pm
  3$ standard deviations (approximately 99.7\% confidence
  interval)\label{NObs:means}}
\end{center} 
\end{figure}

Figure \ref{NObs:means} shows plots representing the mean and variance
of the approximation of the functions $p_0$ (left) and $\nu$ (right)
given an increasing number of observations. As the number of
observations is increased, the better we recover the actual value of
$\nu$ that was used to create the data, and the level of uncertainty
in the distribution is reduced. The convergence of $p_0$ is not so
clear, as the convergence is not expected to be pointwise in this
case, but in $L^2(S^1)$.

\begin{figure}[htp] 
\begin{center} 
\mbox{ 
\subfigure[]{\scalebox{0.31}{\includegraphics{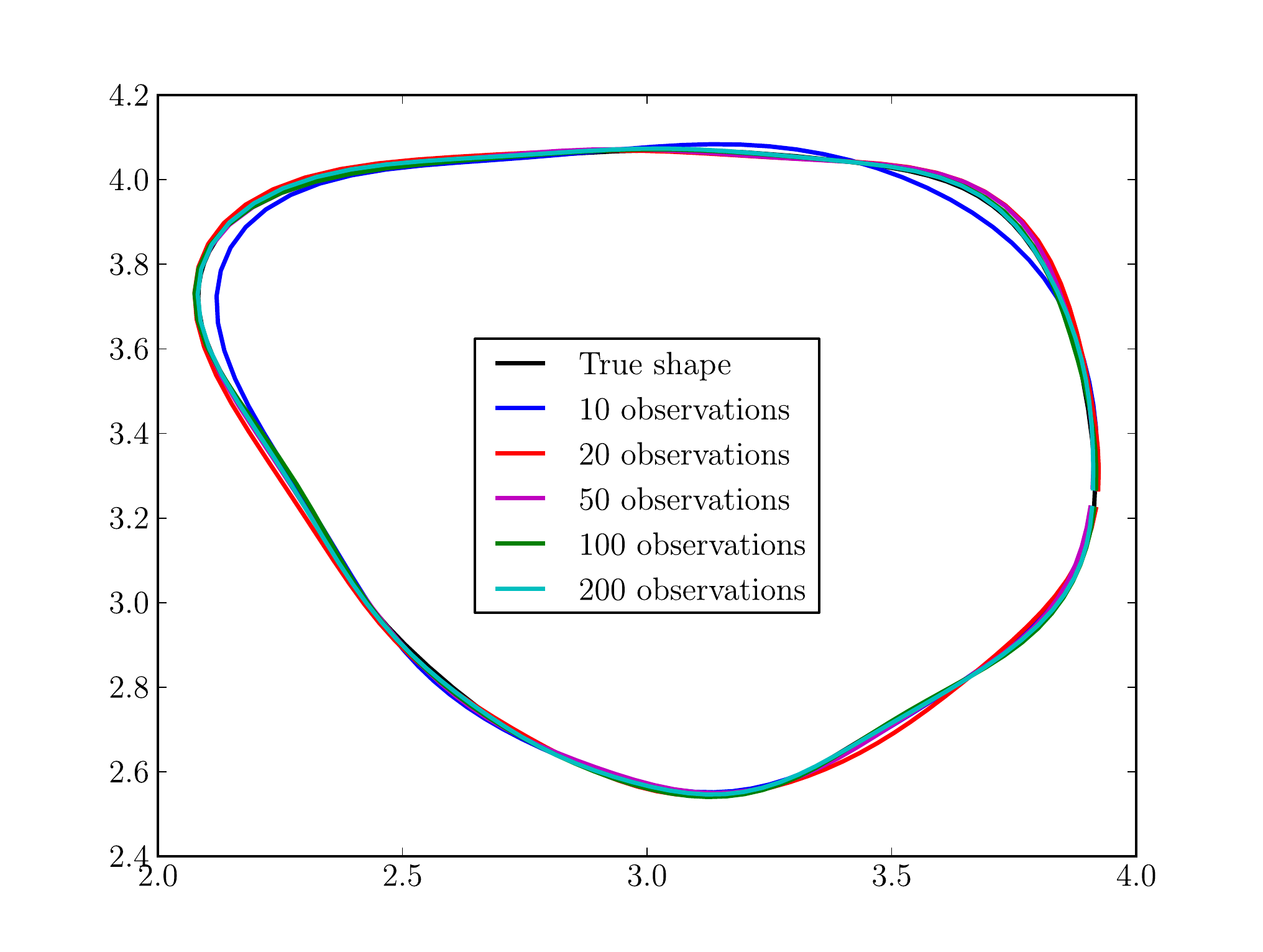}}} 
\subfigure[]{\scalebox{0.31}{\includegraphics{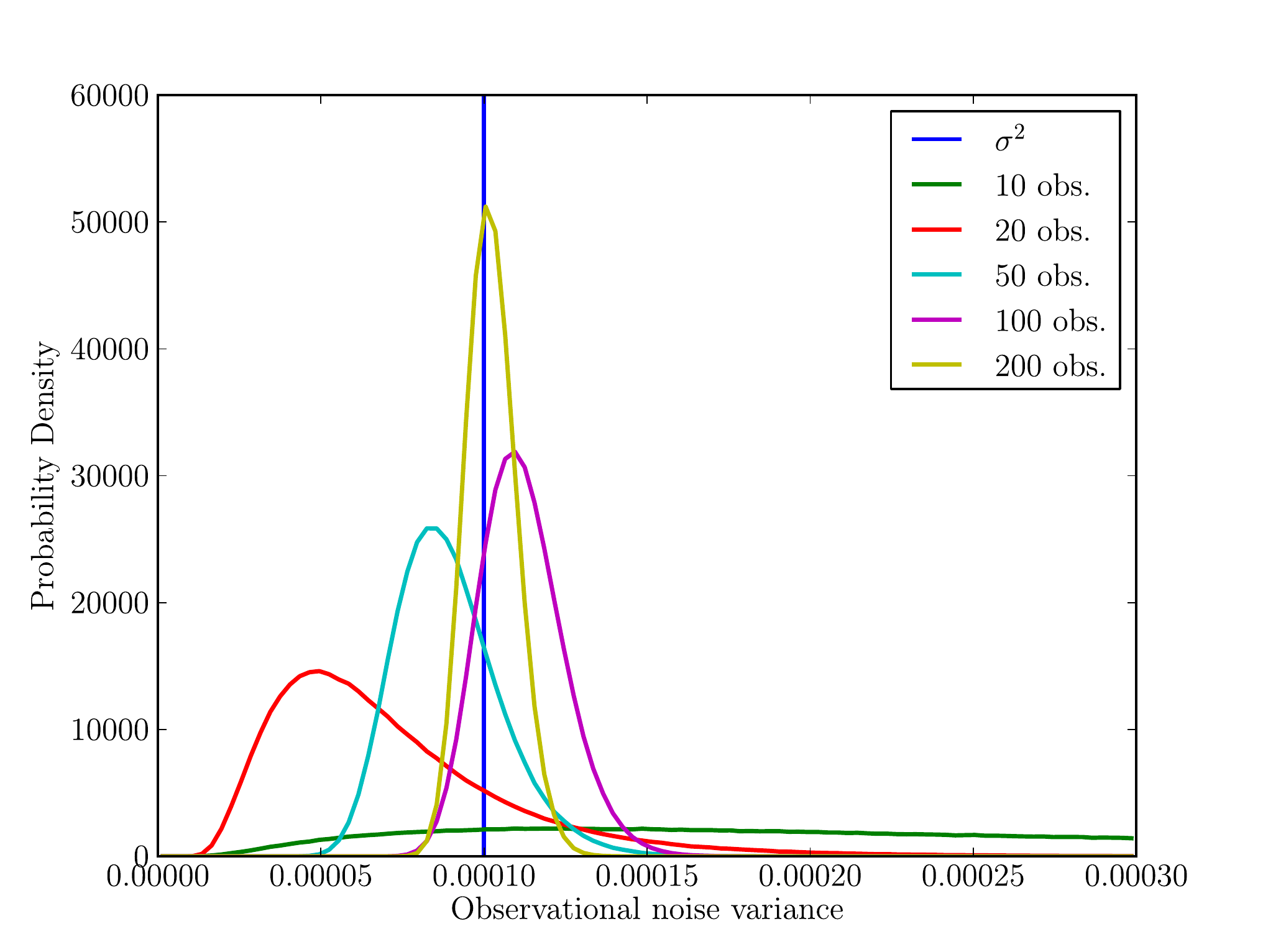}}} 
} 
\caption{\it (a) Plots of the deformed shapes using the mean values of
  $p_0$ from Figure \ref{NObs:means}. (b) Marginal distribution of
  observational noise variance in the MCMC
  chains.\label{NObs:shapes_var}}
\end{center} 
\end{figure}

However, we can look at the deformed shapes that are produced by the
mean initial momenta $\p_0$ shown in Figure \ref{NObs:means}. These
are shown in Figure \ref{NObs:shapes_var} (left). The marginal
distributions on the observational noise variance as estimated by the
MCMC methods are plotted in Figure \ref{NObs:shapes_var}
(right). These clearly show that as the number of observations
increases, the distribution is becoming increasingly peaked around the
value that was used when creating the data.
}

\subsection{Effect of lengthscale on multimodality}\label{sec:len}
One might ask what the advantage of full posterior sampling is over
other less computationally expensive optimisation approaches. In this
section we show that certain data scenarios can cause the posterior
distribution on the initial momentum $p_0$ to be complex with many
local maxima of probability density. In this case the solution from an
optimisation approach may not be unique, depending on the initial
state of the solver. Being able to characterise the whole of the
distribution in this case allows us to identify several different
possibilities for the function $p_0$, and to get a better idea of the
uncertainty in the problem. One such data scenario that exhibits this
posterior multimodality is where we have features in the data whose
lengthscale is smaller than the filtering lengthscale $\alpha$ in the
metric for the diffeomorphism, as we demonstrate in the following
example. Let us once again set the initial shape $\Gamma^1$ to be that
defined by the parameterisation $p^A(s)$ given in \eref{template}. We
then define $\Gamma^2(r)$ for $r \in [0,1]$, such that it consists of
a square of length 2, centred at $[\pi,\pi]$, with quarter-circles of
radius $r$ continuously added on each corner.

\begin{figure}[h]
\centering
\includegraphics[width=0.5\textwidth]{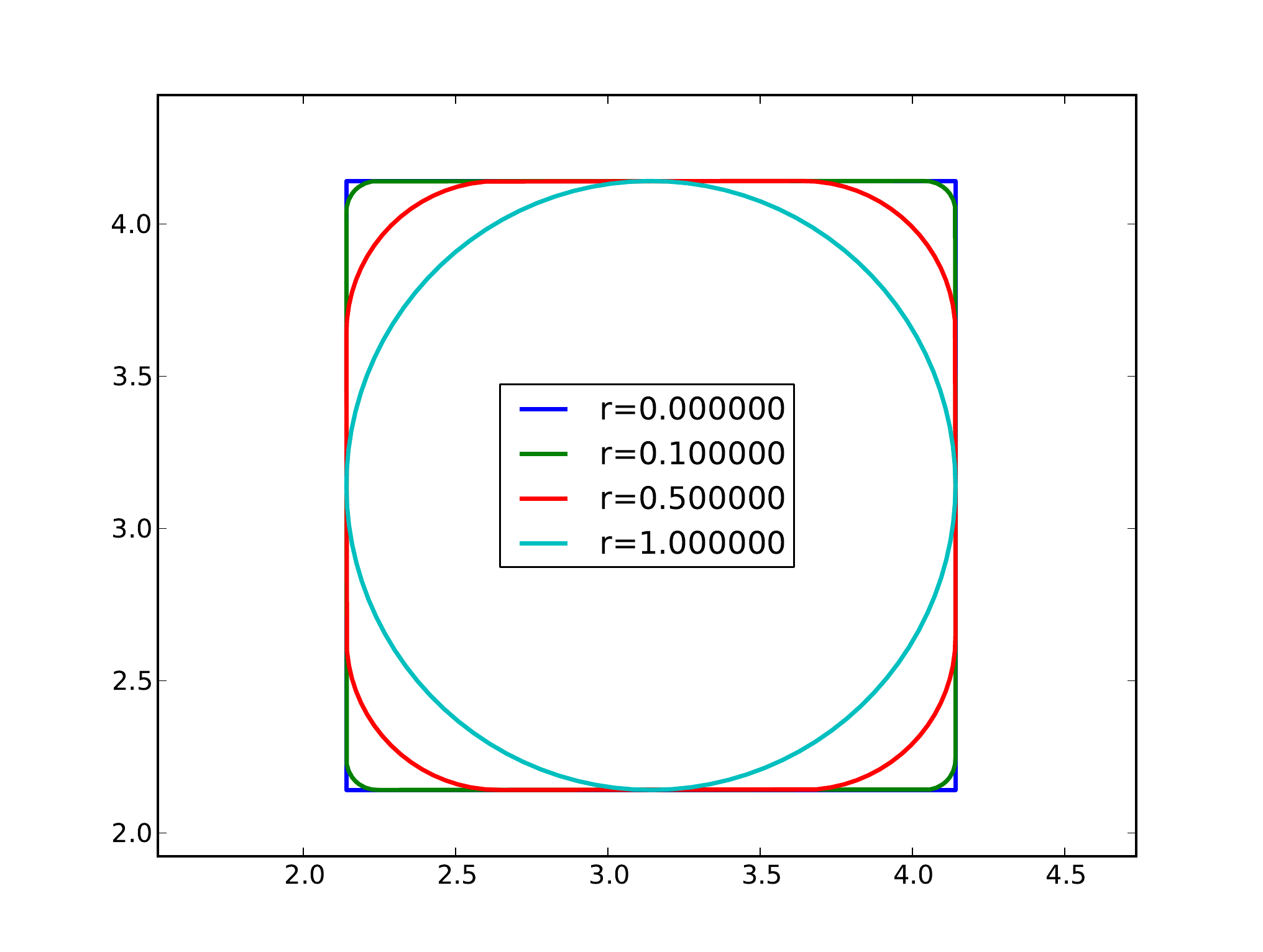}
\caption{\it Plot of the target shape with varying $r$, the radius of
  the quarter circles on the corners of the
  square. \label{square_circle_data}}
\end{figure}

Figure \ref{square_circle_data} shows the target shape for a range of
radii $r$. Note that for $r=0$, the shape is exactly a square of
length 2, and for $r=1$ the shape is a circle of radius 2, exactly
overlapping the template shape \eref{template}. The variable $r$
defines the minimum lengthscale of features in the data. Once $r$
becomes smaller than the maximum lengthscale than can be resolved by
the model, we would expect to see multimodality creeping into the
posterior distribution on $p_0$. Since the template shape cannot be
bent to imitate the features on such a small scale, near to this
feature only some of the observation points can be well-matched for
any given $p_0$. This problem does not cause multimodality in the
posterior distribution on the reparameterisation function $\nu$ since
the observation points on the template shape do not need to move in
order to minimise the distance between themselves and the observations
for any of the possible $p_0$ configurations in the multimodal
posterior on the initial momentum.

In the following experiments we look at what happens to the posterior
distribution on $(p_0,\nu)$ as $r$ starts at 1 and steadily
decreases. For each value of $r$, we consider the shape $\Gamma^2(r)$
as described above, and place $10^3$ observation points along the
shape, evenly spaced in terms of arc length. A large number of
observations are used so that the data can resolve small
lengthscales. We use these observation points as our data, with no
noise added, to isolate the cause of the multimodality purely to the
lengthscale of the features in the data. For the function $\Phi =
\frac{1}{2}\|\cG(p_0,\nu)-y\|^2_\Sigma$ in the likelihood functional,
we pick $\Sigma = \sigma^2I$ with $\sigma = 10^{-2}$.

\begin{figure}[htp] 
\begin{center} 
\mbox{ 
\subfigure[\label{LSP_0}]{\scalebox{0.31}{\includegraphics{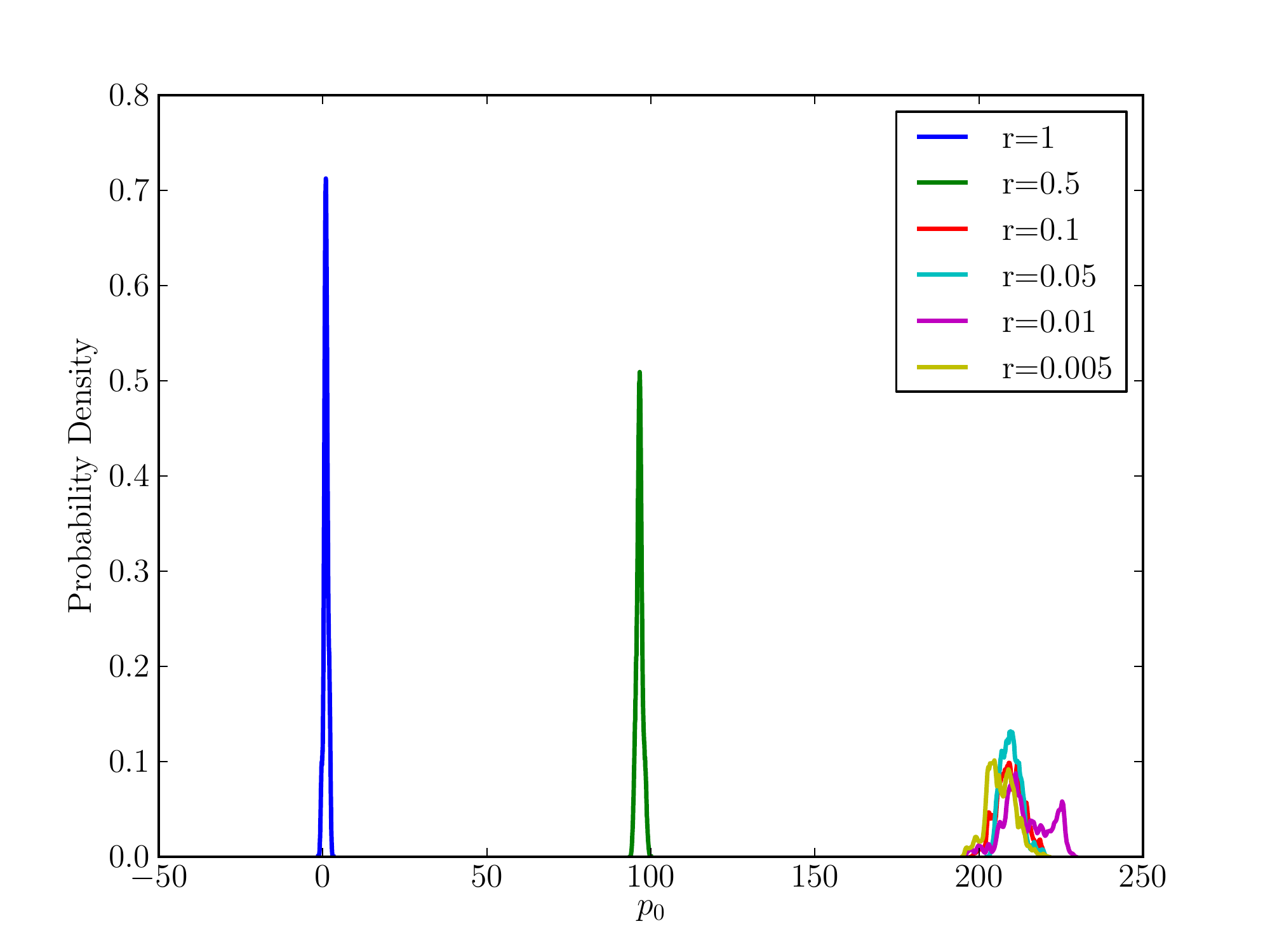}}} 
\subfigure[\label{LSP_1}]{\scalebox{0.31}{\includegraphics{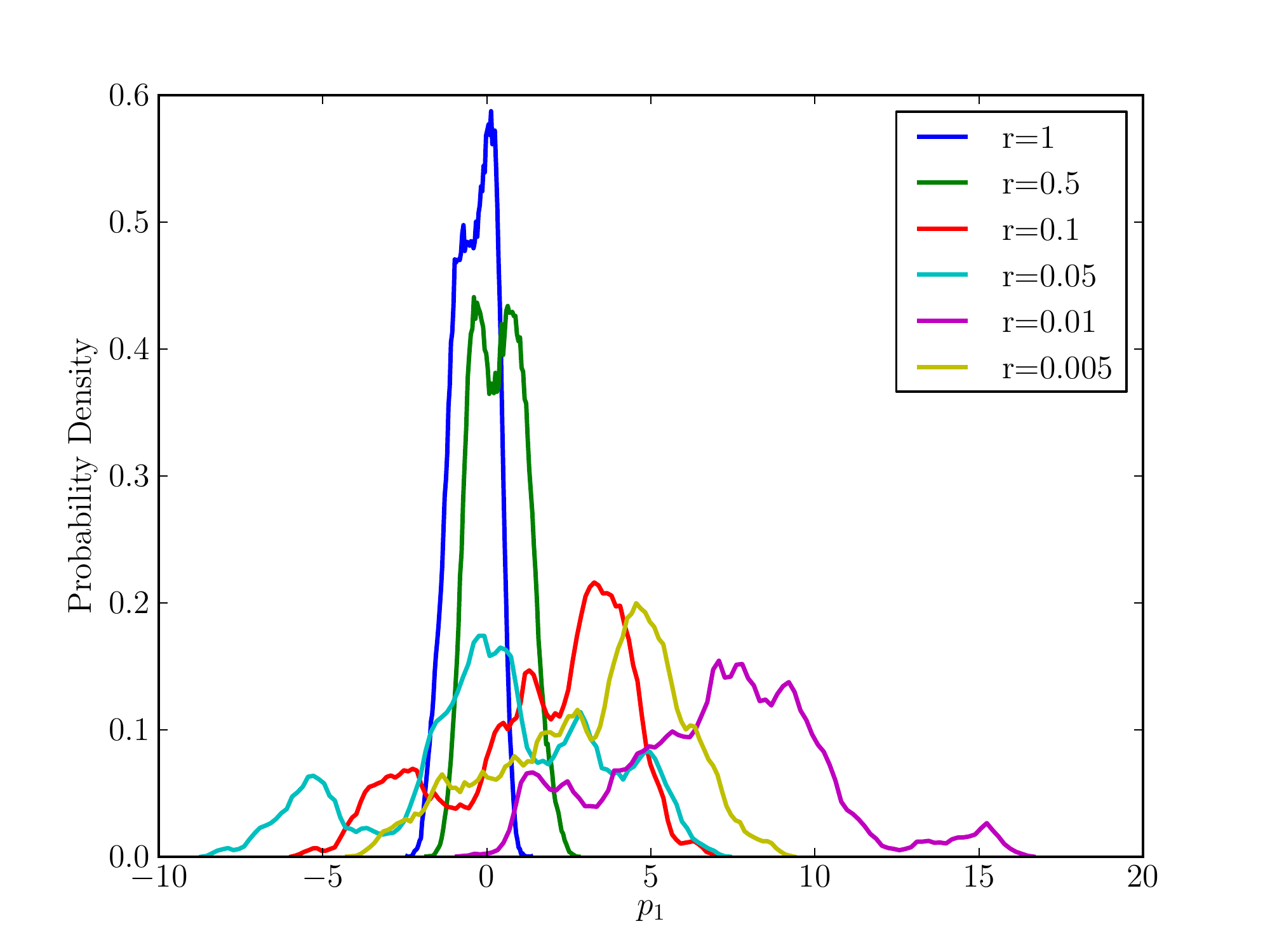}}} 
} 
\caption{\it Plot of marginal distributions on (a) the lowest, (b) the
  second lowest wave number Fourier mode of the initial momentum
  function $p_0$, given varying value of $r$, the radius of the quarter circles in the target shape.\label{LSP}}
\end{center} 
\end{figure} 

Figure \ref{LSP} shows the marginal distributions on the first two
Fourier modes of the initial momentum $p_0$ as the radius of the
quarter circles in the data is reduced. Notice that for $r=1$, where
the data is simply a circle, the distributions are smooth, monomodal
and bell-shaped. However, as $r$ is reduced, the distributions become
increasingly irregular, and in some cases multimodal. Moreover, these
later examples take a great deal of time to converge as the posterior
distribution is more complex{, and it is harder to be sure that we
  have adequately explored the whole of the probability
  measure. However, these results show that the posterior measures
  arising from this application are non-Gaussian, and not necessarily
  monomodal}. Simply finding the local maxima of probability density
for these examples would not be a good description of the nature of
the distribution as a whole, not would it help us in determining the
uncertainty in the problem.

\begin{figure}[htp] 
\begin{center} 
\mbox{ 
\subfigure[\label{LSNu_0}]{\scalebox{0.31}{\includegraphics{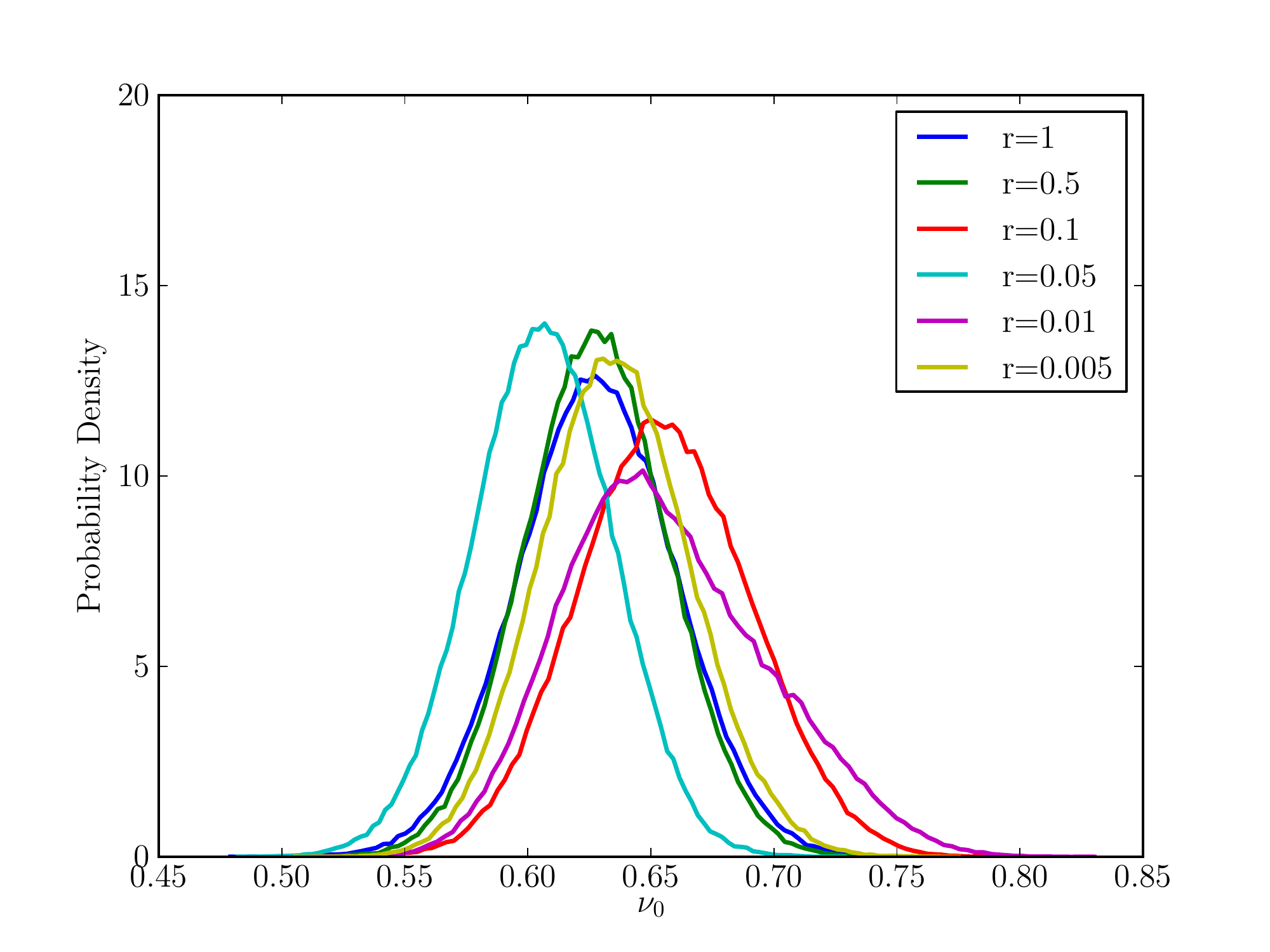}}} 
\subfigure[\label{LSNu_1}]{\scalebox{0.31}{\includegraphics{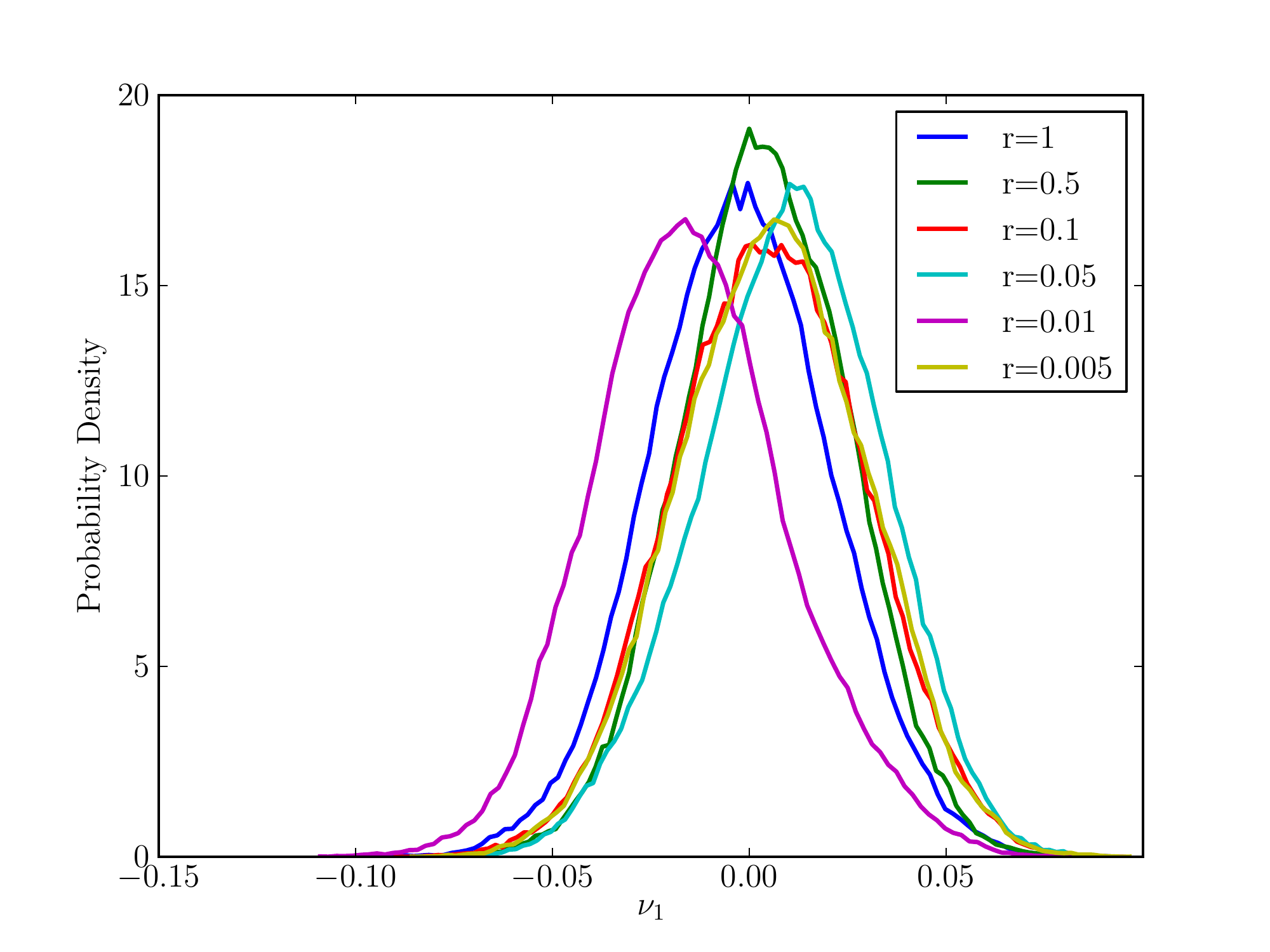}}} 
} 
\caption{\it Plot of marginal distributions on (a) the lowest, (b) the
  second lowest wave number Fourier mode of the reparameterisation
  function $\nu$, given varying value of $r$, the radius of the
  quarter circles in the target shape.\label{LSNu}}
\end{center} 
\end{figure} 
Similarly, Figure \ref{LSNu} shows the marginal distributions on the
first two Fourier modes of the reparameterisation function $\nu$ as
the radius of the quarter circles in the data is reduced. Note that
the distributions on $\nu$ are monomodal in nature, in contrast to
those on $p$ in this data scenario.

\subsection{Partial observations}\label{sec:par}
Another area of interest to many practitioners is algorithms which try
to recover information in the case where our observations of the
target shape are only partial. We may only observe part of the shape,
or our observations in some regions of the shape may be essentially
destroyed by excessive noise. In this case we would expect to find
many different possible shapes which could fit with our data equally
well, meaning that the uncertainty quantification through full
sampling of the Bayesian posterior is informative.

Let us consider the scenario where we have 1000 fairly evenly spaced
observations of the target shape, where the template shape is given by
\eref{template} and where the functions $(p_0, \nu)$ that create the
target shape and the positions of the observation points are draws
from the prior. Suppose now that the observation noise is no longer
i.i.d. around the entire shape, but instead we have two regions with
different covariance structure. In one region, covering approximately
three quarters of the shape's circumference, the noise is relatively
small, distributed as $\mathcal{N}(0,\Gamma)$ with $\Gamma =
\sigma_D^2 I$ with $\sigma_D = 0.0001$. In the final quarter of the
domain we add noise with much larger variance, with $\sigma_L =
0.1$. This value is so large that most of this data is rendered
useless, in terms of extrapolating information about the value of the
functions $p_0$ and $\nu$. {As in Section \ref{sec:PC}, a very
  fine resolution of the curve was used when simulating the data (1000
  points), and a coarser representation was used within the MCMC
  method (100 points).}

In this data scenario, we must also be careful in our choice of
functional $\Phi(p_0,\nu) = \frac{1}{2}\|\cG(p_0,\nu) -
y\|^2_{\Gamma_{\mbox{\tiny PO}}}$ in the likelihood. If we were to set
$\Gamma_{\mbox{\tiny PO}}=\sigma_D^2$ over the whole domain, then we
would be attempting to fit the shape very closely to the very noisy
observations in the final third, which would cause all manner of
problems. On the other hand, if we were to take $\Gamma_{\mbox{\tiny
    PO}}=\sigma_L^2$, we would be assuming a lot more uncertainty in
the majority of the observations than is actually the case. {In
  the numerics that follow, as we did in Section \ref{sec:PC}, we
  treat the observation noise variance $\sigma^2$ as an unknown also
  to be found.}

\begin{figure}[h]
\centering
\includegraphics[width=0.5\textwidth]{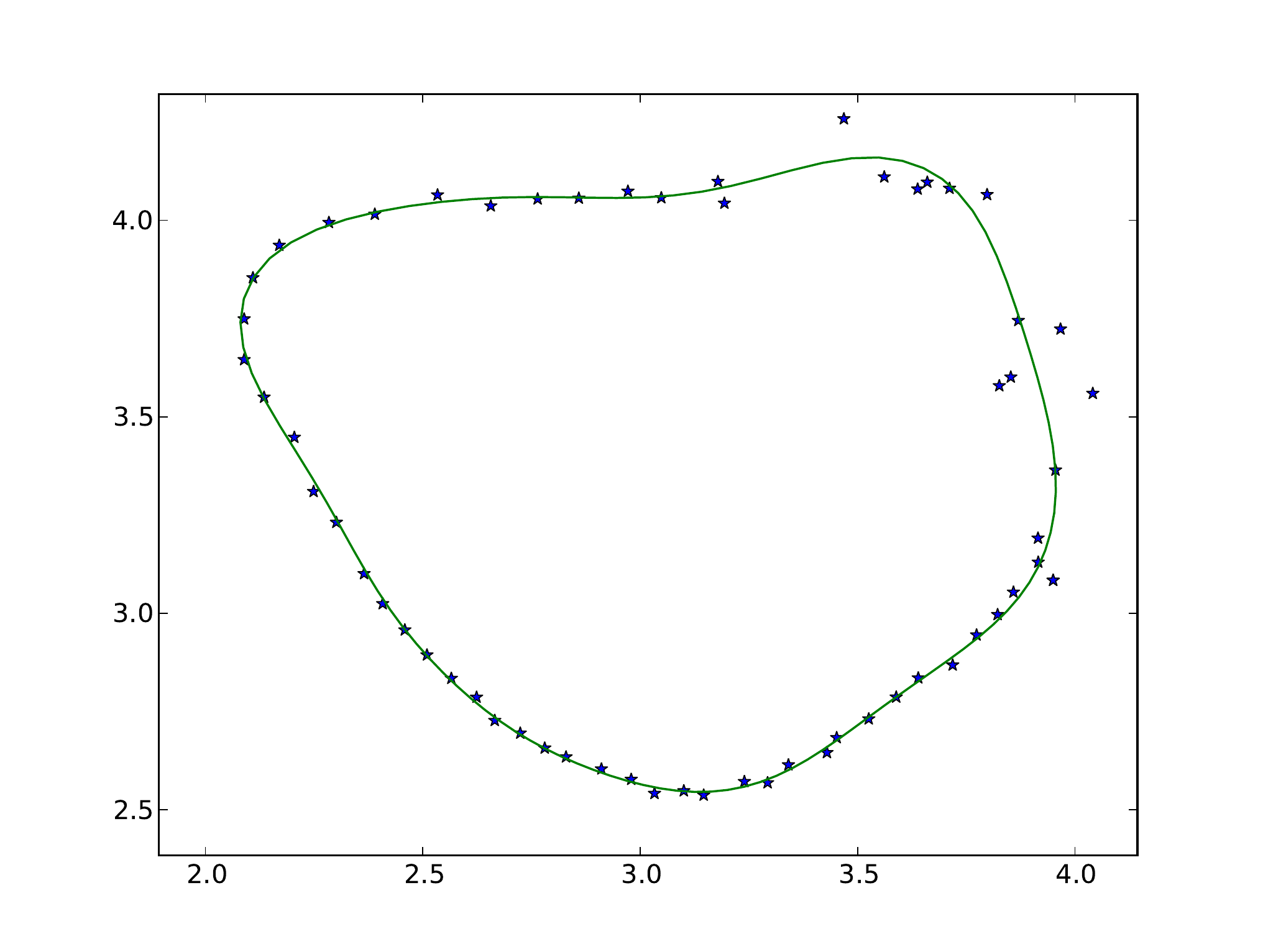}
\caption{\it Plot of partial data, due to large noise variance in one
  region of the observed shape. Green line denotes the converged
  solution of the corresponding optimisation
  problem. \label{partial_data}}
\end{figure}

Figure \ref{partial_data} shows an example of such a data
scenario. The stars denote data points, and the green curve denotes
the converged solution of the corresponding optimisation problem as
described in Section \ref{sec:RWMH}. However, this single curve does not
tell us the whole story, since there are many curves that could
satisfy the data (almost) as well as this curve.

{
\begin{figure}[htp] 
\begin{center} 
\mbox{ 
\subfigure[]{\scalebox{0.31}{\includegraphics{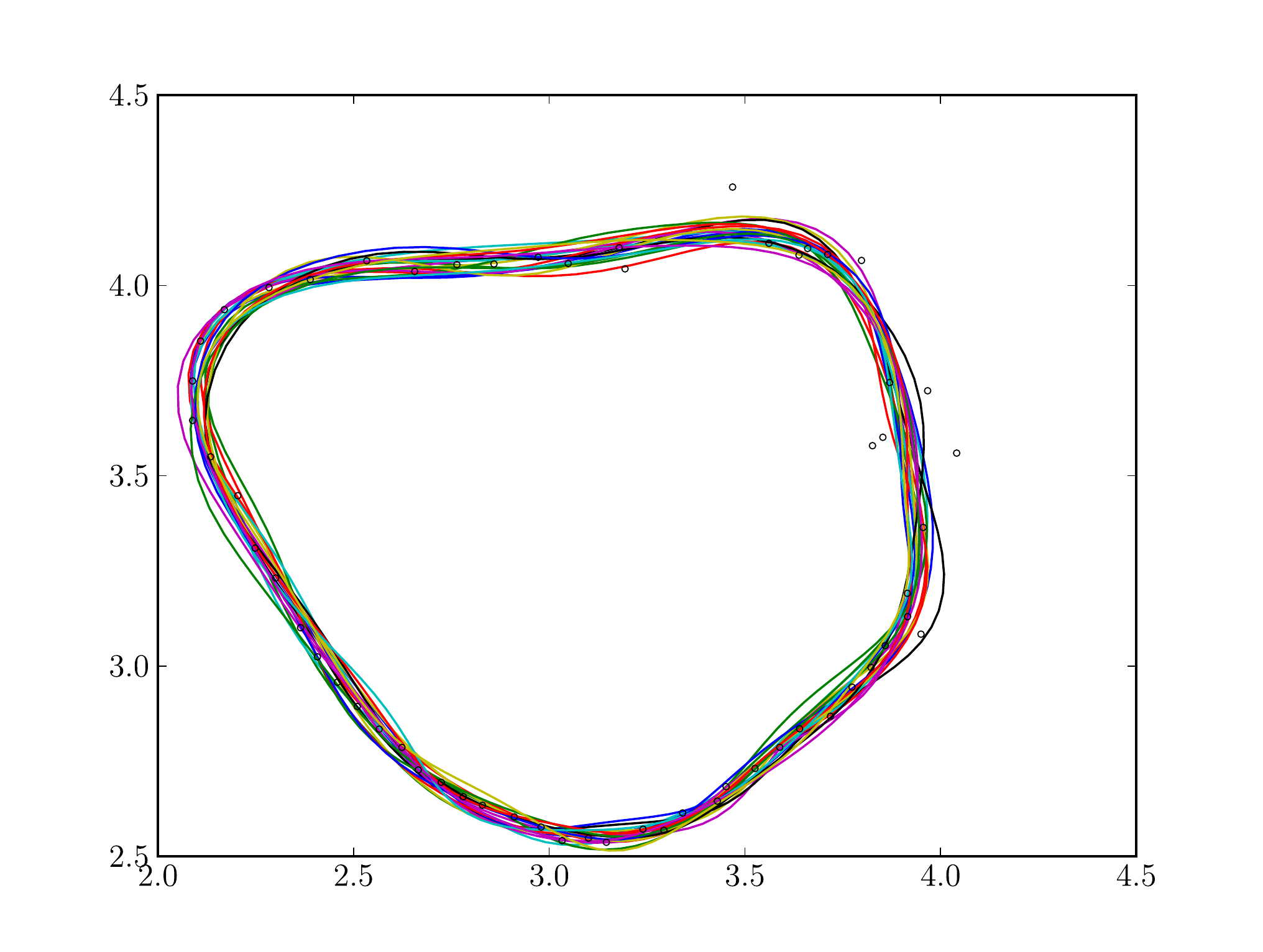}}} 
\subfigure[]{\scalebox{0.31}{\includegraphics{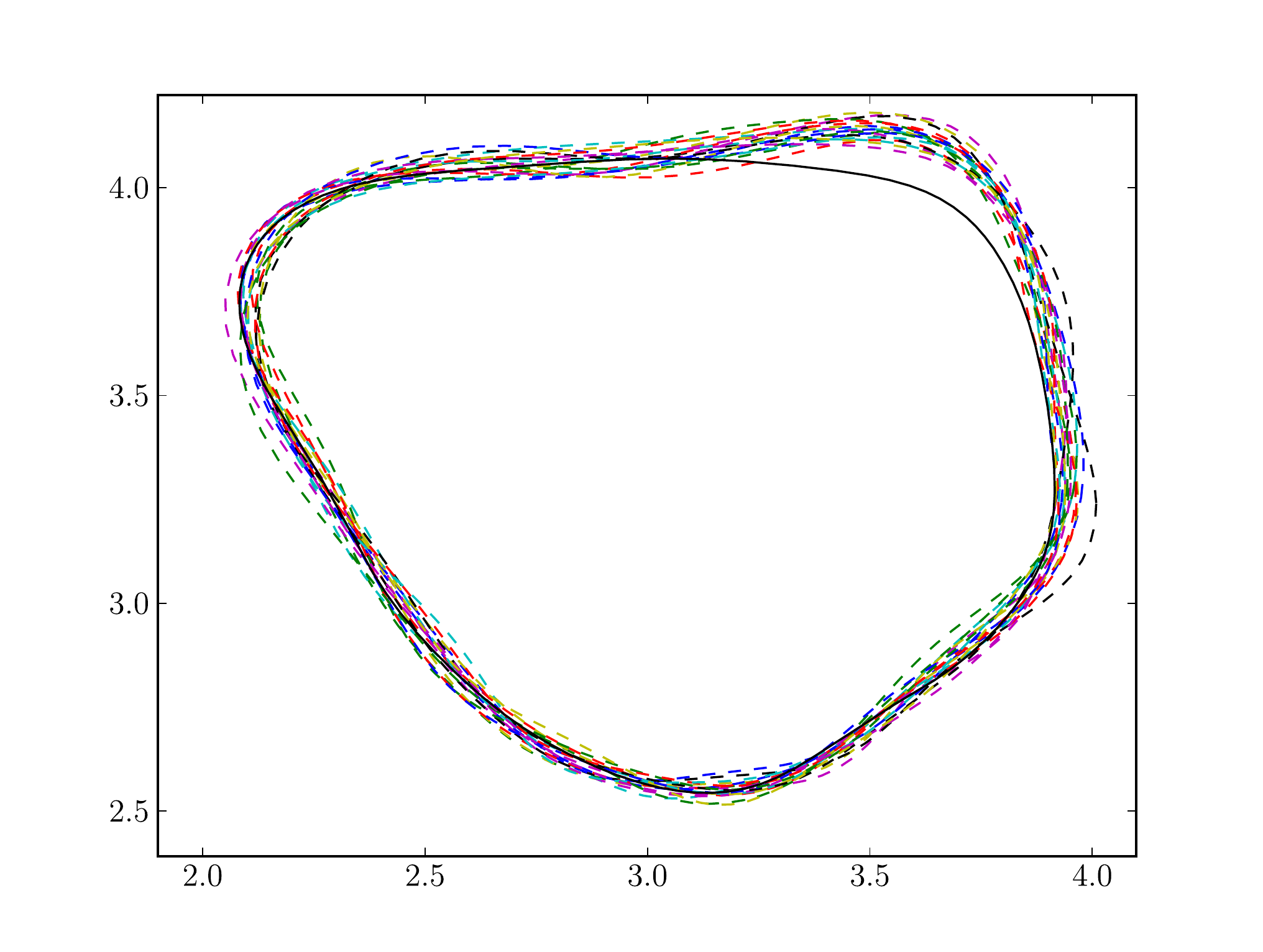}}} 
} 
\caption{\it Plot of 20 shapes arising from samples from the posterior
  distribution using the data plotted in \ref{partial_data} alongside
  (a) the data points, (b) the true shape (black curve). Since the
  recovered noise model assumes uniform observational noise over the
  whole shape, there is higher variability over the whole shape due to
  the extra noise in 1/4 of the data.\label{spag_plots}}
\end{center} 
\end{figure} 

Figure \ref{spag_plots} represents this variability through the plots
of 20 shapes arising from samples of the posterior distribution using
this data. The data is very poor in one region, and as such there are
several shapes which match the data equally as well. The data is so
poor in this one region that the recovery of the true shape in this
area is poor.

\begin{figure}[htp] 
\begin{center} 
\mbox{ 
\subfigure[\label{spag_data}]{\scalebox{0.31}{\includegraphics{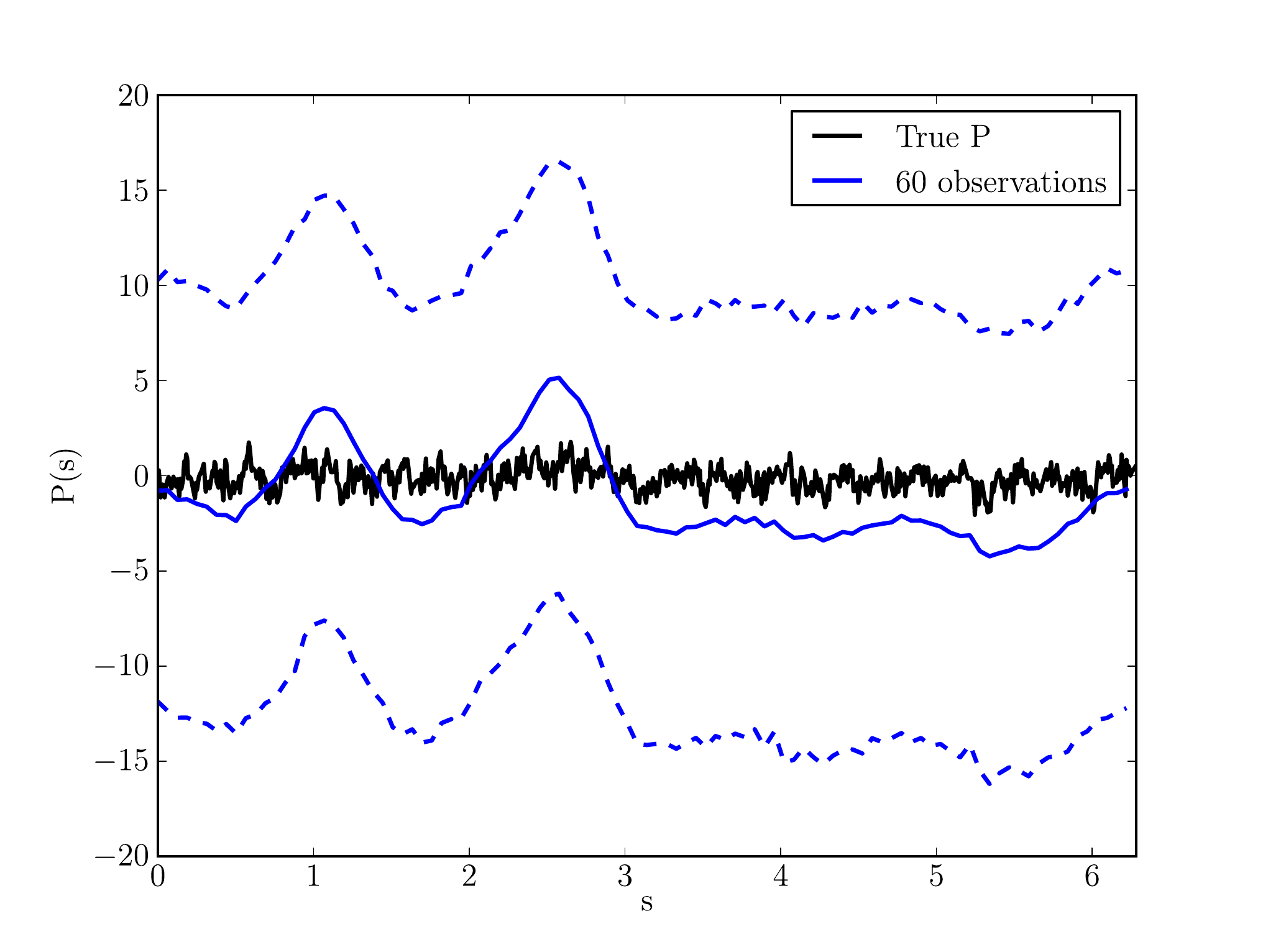}}} 
\subfigure[\label{spag_truth}]{\scalebox{0.31}{\includegraphics{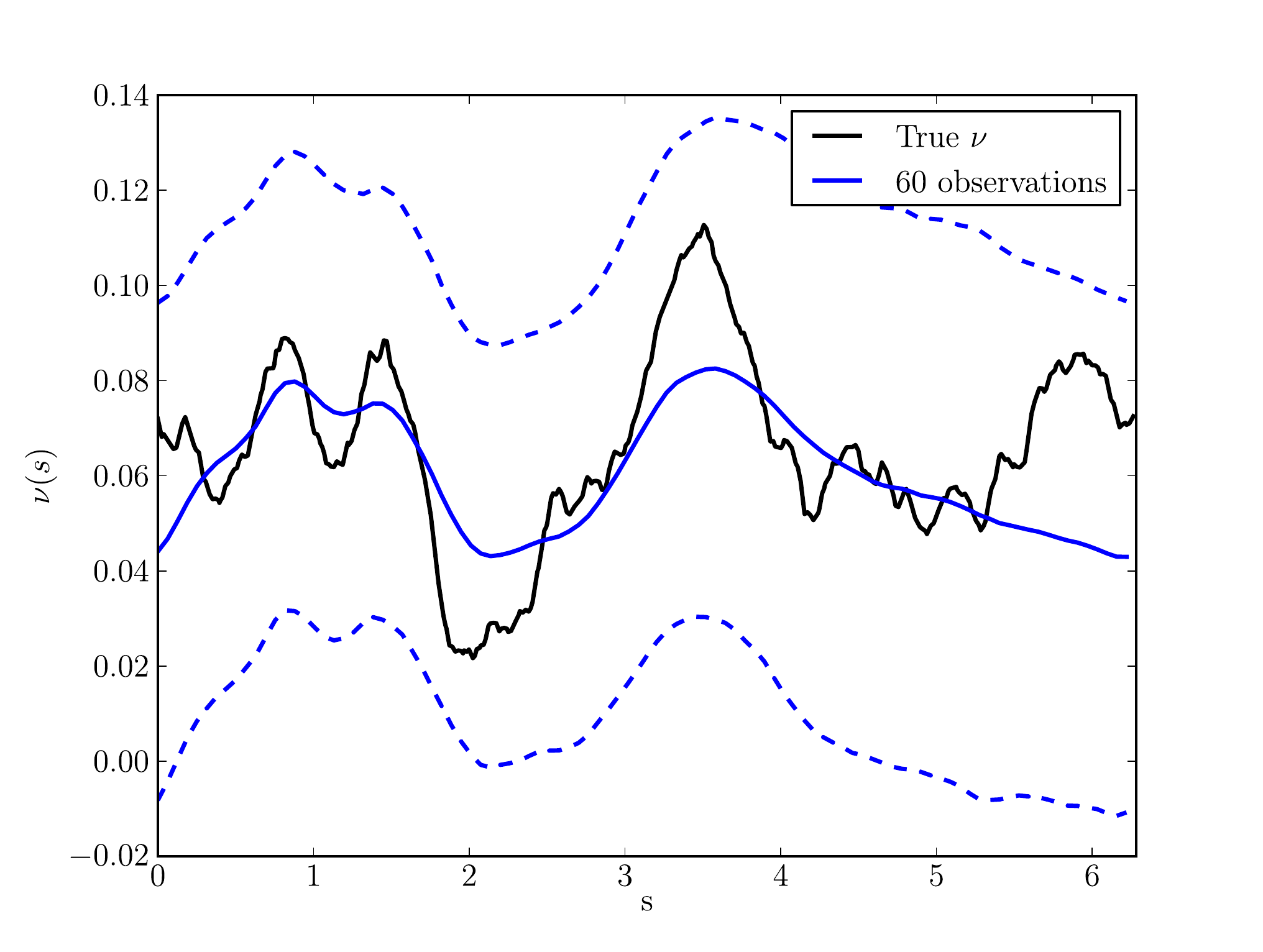}}} 
} 
\caption{\it Plots of the mean values (solid lines) of (a) $p_0$ and $\nu$ occurring in the posterior distribution using the data plotted in \ref{partial_data}. The dashed lines denote $\pm 3$ standard deviations (approximately 99.7\% confidence interval)\label{partial_means}}
\end{center} 
\end{figure} 

This variability can also be seen in Figure \ref{partial_means}, where
the mean and variance of the functions $p_0$ and $\nu$ in posterior
distribution are visualised.

\begin{figure}[h]
\centering
\includegraphics[width=0.5\textwidth]{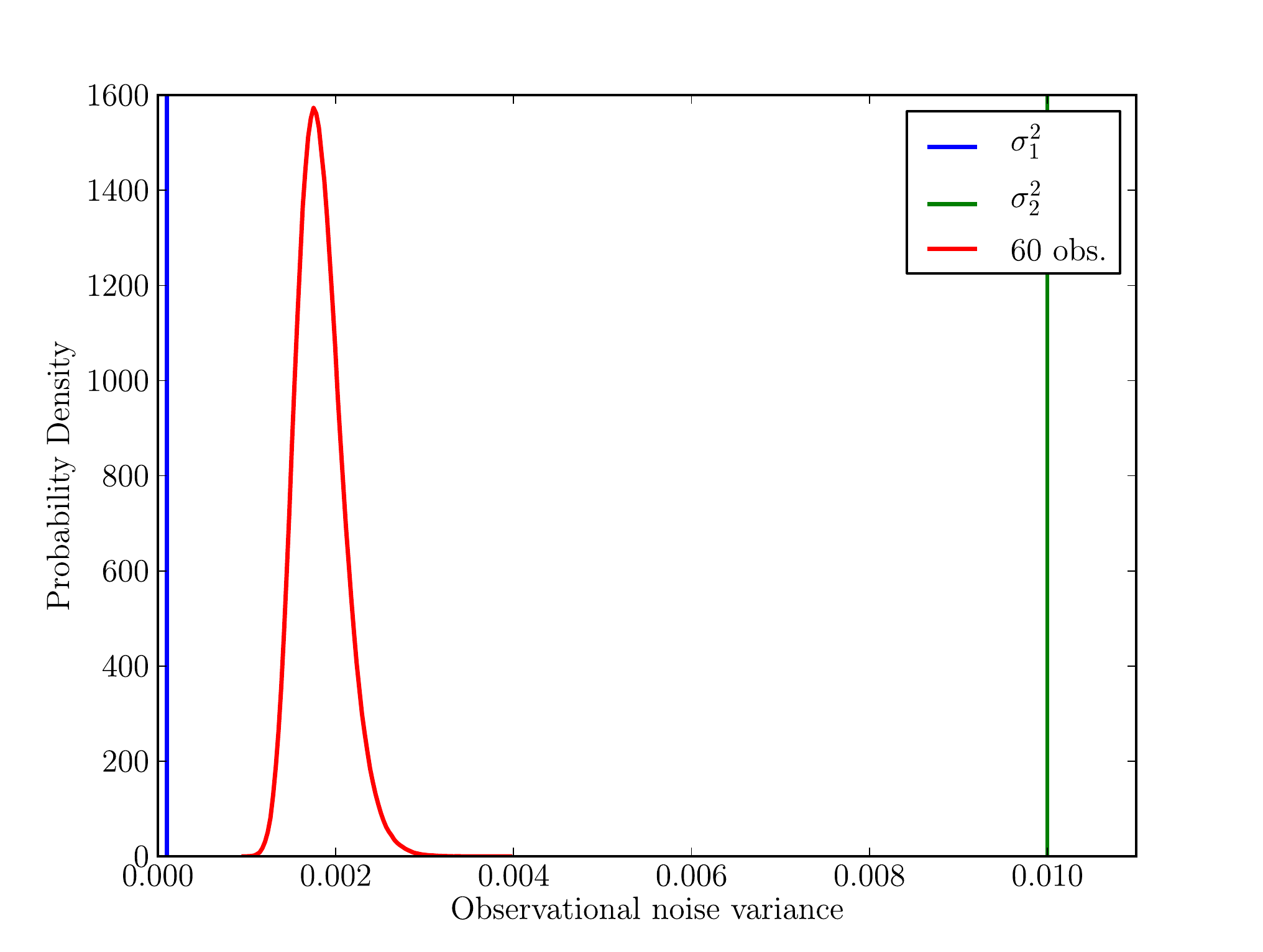}
\caption{\it Marginal distribution of observational noise variance in
  the MCMC chain, using data plotted in \ref{partial_data}. The two
  different variances used when creating the data are also
  shown.\label{par_var}}
\end{figure}

Figure shows the marginal distributions on the observational noise
variance as estimated by the MCMC methods. The non-uniformity of the
noise used when creating the data cannot be recovered in this setting
since we assume uniformity over the whole shape in our prior on the
observational noise variance. Therefore, the recovered values are
somewhere between the poor and good SNR regimes.  } Note that in
comparison with the distributions in Figure \ref{NObs:means}, the
variances in these distributions are quite similar, despite having a
great deal more observations. With a region with a much poorer SNR,
the uncertainty in the system is much greater, and as such there is
still a large region of the state space with similar probability
density. The approach we have used in this paper allows us to quantify
this uncertainty as well as finding states of maximal likelihood.
\section{Conclusions}\label{sec:conc}
This paper presents a Bayesian approach to shape registration which
takes into account the fact that observations of a shape are not
exact, and gives a distribution on the shortest distance in shape
space between the template shape to the observed shape. We concentrate
on the case of a finite number of observations of points from the
target curve, and our approach makes use of the use of explicit
reparameterisation variable as described in
\cite{CoHo2010,CoClPe2011}. The sampling method is framed on function
space and so is robust under refinements of discretisation
\cite{crsw11}.

By careful analysis of the forward problem, we have been able to
formulate a well-posed Bayesian inverse problem regarding shape
matching of a curve with a set of noisy observations of another. We
have shown that the likelihood function is Lipschitz continuous on a
space which has full measure with respect to a specified choice of
Gaussian prior measure. Using this, we have shown how to draw samples
from well-defined posterior distributions using the \SLC{pCN} MCMC
sampler on function space. This choice of algorithm prevents slower
convergence of the statistical algorithm as the discretisation of the
{observation operator} is refined. We have then implemented this
algorithm, and presented briefly some illustrative numerics.

In these numerics, we first showed that the posterior distribution
shows consistency as the number of observations of a recoverable shape
are increased, and the posterior distributions on the functions of
interest become increasingly peaked on the functions which were used
to create the data. We also showed an example, where the lengthscale
of features in the data is small, in which the posterior distributions
exhibit multimodal behaviour, indicating that full characterisation of
the posterior can give us more information than the solution of an
equivalent optimisation problem. Since the shape space is a Riemannian
manifold with regions of positive and negative curvature, there may be
more than one geodesic between two points in shape space, which may
also lead to multimodality.

Finally we showed that this method
can give us a range of different possible solutions in the case of
lost or destroyed data for some part of the target shape.

Since we already have at our disposal an implementation of the adjoint
problem so that we can calculate the gradient of the observation
operator (which we use in the BFGS method for the deterministic
burn-in), we can very simply adapt the MCMC method from random walk to
a gradient method, such as the Metropolis Adjusted Langevin Algorithm
(MALA). This would increase the efficiency of the algorithm
markedly. Further analytical results would be needed, however, to
ensure that the gradient of the observation operator is continuous on
a space which has full measure with respect to an appropriately chosen
prior distribution.

Another extension of this work would be to also make the problem
translation, rotation and scale invariant, so that any misalignment of
the imaging equipment has a negligible effect on the results. This
would involve adding parameters into the state space to allow for
these types of operations. One might also consider the case where the
template shapes themselves are only noisily observed, so that we are
trying to find a distribution on the length of geodesic paths in shape
space between two noisily observed shapes. The problem could also be
extended to the case in which the ordering of the observed points
around the curve is not known, in which case the discrete ordering
would also have to be learned as part of the inverse problem. This
could be done by combining the present approach with a Gibb's sampler
for the ordering. This would result in a Bayesian approach to the
segmentation process, in which points on the boundary between two
materials might first be estimated from a bitmapped image but their
ordering is not known with 100\% confidence. The result would be a
probability distribution on the boundary curve with some prescribed
topology.

\bibliography{MCMC_shape}

\noindent {\bf Acknowledgements}
The research leading to these results has received funding from the
European Research Council under the {\it European Community's} Seventh
Framework Programme ({\it FP7/2007-2013})/ ERC {\it grant agreement}
No. 239870.  This publication was based on work supported in part by
Award No KUK-C1-013-04, made by King Abdullah University of Science
and Technology (KAUST), and the results were obtained using the
Imperial College High Performance Computing Centre cluster. SLC would
also like to thank St Cross College Oxford for support via a Junior
Research Fellowship.
\end{document}